\documentclass[11pt,a4paper]{amsart}
\usepackage{amsmath,amsfonts,amsthm,amsopn,color,amssymb,enumitem}
\usepackage{palatino}
\usepackage{graphicx}
\usepackage[english]{babel}
\usepackage{caption}
\usepackage{subcaption}
\usepackage[colorlinks=true]{hyperref}
\hypersetup{urlcolor=blue, citecolor=red, linkcolor=blue}
\usepackage[utf8]{inputenc}
\usepackage{colortbl}

\usepackage{cite}
\usepackage{esint}

\numberwithin{equation}{section}

\newtheorem{theorem}{Theorem}[section]
\newtheorem{lemma}[theorem]{Lemma}

\newtheorem{definition}[theorem]{Definition}
\newtheorem{proposition}[theorem]{Proposition}
\newtheorem{remark}[theorem]{Remark}

\newtheorem{thmx}{Theorem}


\newcommand{\e}{\varepsilon}

\renewcommand{\o}{\omega}



\newcommand{\R}{\mathbb{R}}

\newcommand{\Sp}{\mathbb{S}^{N-1}}
\newcommand{\Sr}{\mathbb{S}^{N-1}(r)}

\newcommand{\vt}{\vartheta}

\renewcommand{\l }{\lambda}

\newcommand{\N}{\mathbb{N}}

\renewcommand{\o}{\omega}

\newcommand{\beq }{\begin{equation}}
\newcommand{\eeq }{\end{equation}}





\setlength{\hoffset}{-0.5cm}
\setlength{\textwidth}{14cm}



\begin{document}

\title[Nonsymmetric solutions to overdetermined problems in bounded domains]{Nonsymmetric sign-changing solutions to overdetermined elliptic problems in bounded domains}

\author{David Ruiz}
  \address{David Ruiz \\
    IMAG, Universidad de Granada\\
    Departamento de An\'alisis Matem\'atico\\
    Campus Fuentenueva\\
    18071 Granada, Spain}
  \email{daruiz@ugr.es}

\thanks{D. R. has been supported by:
the Grant PID2021-122122NB-I00 funded by MCIN/AEI/ 10.13039/501100011033 and by
``ERDF A way of making Europe'';
	the Research Group FQM-116 funded by J. Andalucia;
the \emph{IMAG-Maria de Maeztu} Excellence Grant CEX2020-001105-M/AEI/10.13039/501100011033 funded by MICIN/AEI.}


\keywords{Overdetermined boundary conditions; semilinear elliptic problems; bifurcation theory.}

\subjclass[2020]{35B06; 35J61.}

\begin{abstract}
In 1971 J. Serrin proved that, given a smooth bounded domain $\Omega \subset \R^N$ and $u$ a positive solution of the problem:

\begin{equation*}
	\begin{cases}
		-\Delta u = f(u) &\mbox{in $\Omega$, }\\
		u =0 &\mbox{on $\partial\Omega$, }\\
	\partial_{\nu} u =\mbox{constant} &\mbox{on $\partial\Omega$, }
	\end{cases}
\end{equation*}
then $\Omega$ is necessarily a ball and $u$ is radially symmetric. In this paper we prove that the positivity of $u$ is necessary in that symmetry result. In fact we find a sign-changing solution to that problem for a $C^2$ function $f(u)$ in a bounded domain $\Omega$ different from a ball. The proof uses a local bifurcation argument, based on the study of the associated linearized operator. 

\end{abstract}

\maketitle


\section{Introduction}

This paper is concerned with solutions to semilinear elliptic problems in the form:
\begin{equation}\label{first}
	\left\{\begin{array} {ll}
		-\Delta u = f(u) & \mbox{in }\; \Omega,\\
			u  = 0 & \mbox{on }\; \partial \Omega, \\
		\partial_{\nu} u =\mbox{constant} &\mbox{on }\; \partial
		\Omega.
	\end{array}\right.
\end{equation}
Here $\Omega$ is a regular domain in $\R^N$, $f$ is a Lipschitz continuous function and $\partial_{\nu} u$ denotes the normal derivative of $u$. These problems are called overdetermined because of the two boundary conditions, and appear quite naturally in the study of free boundaries in many different phenomena in Physics, like in capillarity, elasticity and others (see \cite{S02,S56} for more details).

Because of the two boundary conditions one does not expect, in general, to obtain generic existence results for arbitrary domains. Indeed, many rigidity results are available in the literature. The first one is the result of J. Serrin in 1971 (\cite{serrin}), who proved that if $\Omega$ is bounded and $u$ is positive, then necessarily $\Omega$ is a ball and $u$ is radially symmetric. Serrin's proof is based on the moving plane method, introduced in 1956 by A. D. Alexandrov in \cite{A56} to prove that the only compact embedded hypersurfaces in $\R^N$ with constant mean curvature are the spheres. This is also the first example of a recurrent analogy between overdetermined elliptic problems and constant mean curvature (CMC) surfaces.  This result has been later extended to fully nonlinear operators (\cite{mira, si-si}).  
\medskip

The case of unbounded domains $\Omega$ has also attracted much attention, and some rigidity results have been given, see \cite{BCN97, FV10, R97, RRS17, sir, WW}. Under certain assumptions, in those papers the authors show that positive solutions to \eqref{first} depend only on one variable, or they are radially symmetric, depending on the case under study. However, for unbounded domains $\Omega$ the situation turns out to be less rigid and nontrivial solutions exist. A first such example was found by Sicbaldi in \cite{S10}, where it is shown that suitable periodic perturbations of the straight cylinder $B^{N-1}\times\mathbb{R}$ support periodic solutions to the problem (\ref{first}) with $f(u)=\lambda u, \lambda>0$. This solution can be seen as an analogue of the onduloid and strengthen the analogy between overdetermined problems and CMC surfaces. After such first construction, other examples of nontrivial solutions have been obtained, see for instance \cite{DPW15, FMW17, RRS20, RSW1, SS12}. 

Overdetermined elliptic problems posed in complete Riemannian manifolds have also been studied, see \cite{DS15, DEP19, FMV13, PS09, S14}. In this framework, the case of subdomains $\Omega$ of the sphere $\mathbb{S}^N$ or the hyperbolic space $\mathbb{H}^N$ is of special interest. A complete counterpart of the Serrin's theorem has been obtained in \cite{KP98} if $\Omega$ is a bounded domain in $\mathbb{H}^N$ or in the semisphere $\mathbb{S}^{N}_+$. Another rigidity result is given in \cite{EM19} for simply connected domains $\Omega \subset \mathbb{S}^2$ and for certain nonlinearities $f(u)$. On the other hand, nontrivial examples in the sphere have been constructed (see\cite{FMW18, RSW2}). 

\medskip

All the previous discussion is concerned with positive solutions of \eqref{first}. Sign-changing solutions, however, are also important in the applications and have been the object of intensive research. Maybe the most most challenging open question in the framework of overdetermined problems is the Schiffer conjecture, which can be written as follows: 

\medskip

{\bf Schiffer Conjecture}: Let $\Omega \subset \R^N$ be a bounded regular domain, and $u: \Omega \to \R$ a nonconstant solution to the problem:
$$ \left\{ \begin{array}{lr}  \Delta u + \lambda u  =0 & \mbox{in } \Omega,\\ u= c &  \mbox{ on } \partial \Omega, \\ \frac{\partial u}{\partial \nu}=0 &  \mbox{ on } \partial \Omega. \end{array} \right. $$
Then $\Omega$ is a ball and $u$ is radially symmetric. \

\medskip

If we define $\tilde{u}=u - c$ we are led with a problem in the form \eqref{first}, without any sign restriction on the function $\tilde{u}$. The Schiffer conjecture has attracted much attention since it turns out to be equivalent to the resolution of the so-called Pompeiu problem, see \cite{williams, Zalcman}.

Another motivation to study problems like \eqref{first} comes from stationary solutions of Euler equations, see for instance \cite{enciso, yo, hamel4}. We emphasize that, in this framework, the function $u$ need not be positive.

%
%
%
%
%
%
%

When considering sign-changing solutions, maybe the most natural question is whether the original result of Serrin is true also without the positivity assumption. This problem has remained completely open so far. The main goal of this paper is to give a negative answer to that question. Indeed we will prove the following result: 

\begin{theorem}\label{0main} Let $N=2$, $3$ or $4$. There exist bounded smooth domains $\Omega \subset \R^N$ different from a ball such that the problem:
\begin{equation} \label{0}
	\begin{cases}
		-\Delta u = u-(u^+)^3  &\mbox{in $\Omega$, }\\
		u=0 &\mbox{on $\partial\Omega$, }\\
		\partial_{\nu} u=\mbox{constant}\neq 0 &\mbox{on $\partial\Omega$, }
	\end{cases}
\end{equation}
admits a sign-changing solution.
\end{theorem}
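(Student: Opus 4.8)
The plan is to start from the ball $B = B_1(0) \subset \R^N$, where the radial solution $u_0$ of $-\Delta u = u - (u^+)^3$ with $u_0 = 0$ on $\partial B$ exists for a suitable choice of radius (or, after rescaling, for a suitable eigenvalue-type parameter), and then to bifurcate to nonsymmetric domains. Concretely, I would parametrize perturbed domains $\Omega_\phi$ by a function $\phi \in C^{2,\alpha}(\mathbb{S}^{N-1})$ via a normal deformation $x \mapsto x(1 + \phi(x/|x|))$ of the sphere, and for each such domain solve the Dirichlet problem $-\Delta u = u - (u^+)^3$ in $\Omega_\phi$, $u = 0$ on $\partial\Omega_\phi$ — here the sign-changing radial solution on the ball must be \emph{nondegenerate} for the Dirichlet problem so that this solution map $\phi \mapsto u_\phi$ is well-defined and smooth near $\phi = 0$ by the implicit function theorem. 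Then the overdetermined condition becomes a single scalar equation: the map
\begin{equation*}
F(\phi) = \partial_\nu u_\phi \Big|_{\partial \Omega_\phi} - \fint_{\partial\Omega_\phi} \partial_\nu u_\phi \, d\sigma \in C^{1,\alpha}_0(\mathbb{S}^{N-1})
\end{equation*}
(the normal derivative projected onto its mean-zero part, read back on the sphere) should vanish. We have $F(0) = 0$ since $u_0$ is radial, and the goal is a nontrivial branch of zeros of $F$ emanating from $0$.

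The heart of the matter is the linearized operator $F'(0): C^{2,\alpha}_0(\mathbb{S}^{N-1}) \to C^{1,\alpha}_0(\mathbb{S}^{N-1})$. A standard computation (as in Sicbaldi's and subsequent work) shows that $F'(0)$ is diagonal in the basis of spherical harmonics: on the eigenspace corresponding to the $k$-th eigenvalue of $-\Delta_{\mathbb{S}^{N-1}}$ it acts by multiplication by a number $\mu_k$ that depends on the radial solution $u_0$ and on an associated one-dimensional ODE (the radial part of the linearized Dirichlet problem with the appropriate boundary data coming from the domain variation). The strategy is then to introduce a parameter — either the radius $R$ of the ball or equivalently a scaling parameter in the equation — and to show that for some value of this parameter the coefficient $\mu_k$ vanishes for exactly one mode $k \ge 2$ (so that the kernel of $F'(0)$ is exactly that spherical harmonic eigenspace, with the transversality/Crandall--Rabinowitz condition satisfied), while remaining nonzero for all other modes, \emph{and} while keeping the radial Dirichlet solution nondegenerate. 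Once this spectral picture is in place, Crandall--Rabinowitz bifurcation from a simple eigenvalue (restricted to a fixed subspace of spherical harmonics in the mode-$k$ direction to handle the $O(N)$-invariance and make the kernel genuinely one-dimensional) produces a branch of nonradial domains $\Omega_\phi \ne B$ carrying sign-changing overdetermined solutions. The restriction $N \in \{2,3,4\}$ will come precisely from the dimension-dependent analysis of the ODE coefficients $\mu_k$ and of nondegeneracy.

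The main obstacle — and the part requiring real work rather than bookkeeping — is the analysis of the ODE governing $\mu_k$: one must show that, as the parameter varies, some coefficient $\mu_k$ changes sign (hence vanishes) while \emph{all} the others, including the Dirichlet nondegeneracy condition, stay away from zero at that parameter value, and that the crossing is transversal. For positive solutions $f(u) = \lambda u$ this kind of computation is classical, but the term $(u^+)^3$ makes the nonlinearity merely $C^2$ and, more importantly, the radial solution changes sign, so the linearized radial operator $-\Delta - f'(u_0) = -\Delta - 1 + 3(u_0^+)^2$ has a potential that is not monotone; controlling its spectrum and the sign of the relevant $\mu_k$ requires careful ODE estimates (comparison arguments, oscillation/Sturm theory, possibly explicit or asymptotic analysis near the nodal sphere of $u_0$), and this is where the dimension $N \le 4$ hypothesis is genuinely used. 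A secondary technical point is checking the nondegeneracy of $u_0$ for the Dirichlet problem on the ball (so the solution map is well-defined) — plausibly arranged by choosing the radius so that $0$ is not a Dirichlet eigenvalue of the linearized operator — and verifying the regularity/mapping properties of $F$ given the limited smoothness of $f$, which is where $f \in C^2$ rather than smooth must be tracked through the argument.
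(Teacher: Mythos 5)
Your overall architecture — a one-parameter family of radial sign-changing solutions on balls, a Dirichlet-to-Neumann map $F$ whose zeros give overdetermined solutions, nondegeneracy of the radial Dirichlet problem to set up the implicit function theorem, and bifurcation driven by the linearized operator — coincides with the paper's. Two aspects, however, are either genuine gaps or misidentifications.

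First, you propose Crandall--Rabinowitz bifurcation from a simple eigenvalue, which requires a transversality (strict eigenvalue-crossing) condition that you give no route to verify and that the paper does not in fact prove. The paper instead invokes the Krasnoselskii bifurcation theorem, which is degree-theoretic: it suffices that the index (sum of multiplicities of negative eigenvalues of the linearization) has different parity at two parameter values. Concretely, the paper shows via a variational characterization that the first eigenvalue $\tau_1$ of the linearized Dirichlet-to-Neumann operator is positive when $\rho$ is near $\bar{\lambda}_2^{-1}$, becomes negative as $\rho$ approaches a point $\rho_0$ where the Dirichlet linearization degenerates, and that wherever $\tau_1$ vanishes the kernel has odd multiplicity. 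None of this needs transversality, and the sign change of $\tau_1$ is deduced from the degeneracy of the Dirichlet problem (together with the asymptotics of $u_\rho$ as $\rho\to 0$), not from a direct ODE computation of each mode coefficient $\mu_k$. Second, the restriction $N\in\{2,3,4\}$ is not, as you suggest, caused by dimension-dependent ODE estimates. It is a group-theoretic constraint: the whole argument is carried out in the class of functions invariant under a finite group $G\subset O(N)$ chosen so that the first $G$-invariant nonradial Neumann eigenvalue of the ball exceeds $\bar{\lambda}_2$ \emph{and} has odd multiplicity (condition \ref{G}). Merely "restricting to a fixed subspace of spherical harmonics in the mode-$k$ direction," as you put it, removes the translation degeneracy but does not furnish a Banach-space setting in which $F$ is well defined nor control the kernel's multiplicity; the $G$-equivariant restriction does both. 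Suitable groups (dihedral $\mathbb{D}_k$, $k\geq 5$; the icosahedral group; the rotation group of the hyper-icosahedron) are only known for $N\leq 4$, and that is why the theorem is stated for those dimensions.
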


The proof of Theorem \ref{0main} makes use of a local bifurcation argument. For that, we first build a family of sign-changing radial solutions to the problem: 
\begin{equation*}
	\begin{cases}
		- \rho \, \Delta u = u - (u^+)^3 & \mbox{ in }B, \\
		u=0 &\mbox{on }\partial B. \end{cases}
\end{equation*}
Here $B$ is the unit ball and $\rho \in (0, \bar{\rho})$, where $\bar{\rho}$ is a fixed positive constant. In our argument we need that the family of solutions $u_\rho$ is smooth with respect to $\rho$, which is a consequence of the nondegeneracy of the solutions with respect to radial variations of the Dirichlet problem. We also need nondegeneracy of the Dirichlet problem even for nonradial variations, at least for $\rho$ in a certain subinterval $I\subset (0, \bar{\rho})$. This allows us to build a nonlinear Dirichlet-to-Neumann operator $F_\rho$ whose zeroes correspond to solutions of \eqref{0} (see \eqref{eqF}). Then, we are able to prove that the corresponding linearized operator $DF_\rho$ becomes degenerate at a certain value of $\rho \in I$, and that its multiplicity is odd. At this point we are under the conditions of the Krasnoselskii bifurcation theorem to prove local bifurcation. A rescaling of the dependent variable allows us to conclude the proof of Theorem \ref{0main}.

The main difficulty in the proof is to devise an appropriate choice of nonlinear term $f(u)$, and a convenient family of solutions $u_\rho$, with all the properties described above. Regarding the nonlinearity, the fact that $f(u)=u-(u^+)^3$  is linear for $u<0$, but of Allen-Cahn type for $u>0$, is essential in our arguments. Minor modifications in the proofs would give the same result for $f(u)= u - (u^+)^p$, $p>1$. Regarding the family of solutions, we also need to know the asymptotic behavior of $u_\rho$ as $\rho \to 0$: this is useful for proving the crossing of eigenvalues of the linearized operator.

In order to study the nondegeneracy of the Dirichlet problem and the eigenvalues of $DF_\rho$, we need to deal with linear operators defined in $B$ with radially symmetric coefficients. By separation of variables, the eigenfunctions come as radial functions multiplied by spherical harmonics. In order to show that the first eigenvalue of $DF_\rho$ is positive for some $\rho$ we need to exclude some of the first spherical harmonics of the sphere. We do that by imposing a (suitably large) symmetry group $G$ in our functional setting (see condition \ref{G} for details). In this way, we also rule out the degeneracy of the problem due to its invariance by translations in $\R^N$. 

For $N=2$, it suffices to consider a dihedral symmetry group $\mathbb{D}_k$, $k \geq 5$. If $N=3$, the group of all isometries of the icosahedron satisfies our assumption \ref{G}. We have also found an appropriate symmetry group in dimension 4, the group of rotations of the hyper-icosahedron. The hyper-icosahedron is a regular polytope in $\R^4$ with 600 tetrahedral cells and 120 vertices, and its group of rotations forms a 7200 elements subgroup of $SO(4)$ (see \cite{widom}). It is worth pointing out that the set of regular polytopes in dimension $N \geq 5$ is limited to three (the hyper-tetrahedron, the hyper-cube and the hyper-octahedron), and their symmetry groups do not seem to satisfy \ref{G}. It is not clear to us if there exists a different type of symmetry group (not related to regular polytopes) satisfying condition \ref{G} also for $N \geq 5$; in such case, our result extends immediately to dimension $N$.

We finish this introduction by commenting some consequences of Theorem \ref{0main}. As was commented above, overdetermined elliptic problems appear also in the study of fluid equations, in particular of stationary solutions of the 2D Euler equations. In this framework, the following result has been recently obtained by Hamel and Nadirashvili (\cite[Theorem 1.10]{hamel4}).

\begin{thmx}[\cite{hamel4}] \label{teo} Let $v: \overline{\Omega} \to \R^2$ be a $C^2$ solution of the stationary Euler equations:
	
\begin{equation} \label{2D} \left \{ \begin{array}{ll}  v \cdot \nabla v & = - \nabla p, \\ div \, v & =0.\end{array}  \right. \end{equation}

Here $\Omega$ is a bounded regular and simply connected domain in $ \R^2$. Assume also that:
	
	\begin{enumerate}
		\item $v \cdot \nu =0$ and $|v|$ is a nonzero constant on $\partial \Omega$.
		\item There exists a unique point $p \in \Omega$ such that $v(p)=0$.
	\end{enumerate}
	
	Then $\Omega$ is a ball centered at $p$ and $v$ is a circular vector field.
	
\end{thmx}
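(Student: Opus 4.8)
\medskip
\noindent
The plan is to reduce Theorem~\ref{teo} to Serrin's rigidity theorem for \eqref{first} by passing to a stream function. Since $\Omega$ is simply connected and $\operatorname{div}v=0$, there is $\psi\in C^3(\overline{\Omega})$ with $v=\nabla^\perp\psi:=(-\partial_{x_2}\psi,\partial_{x_1}\psi)$; the vorticity is then $\omega:=\partial_{x_1}v_2-\partial_{x_2}v_1=\Delta\psi$, and taking the curl of the first equation in \eqref{2D} gives $v\cdot\nabla\omega=0$. Hence $\omega$ is constant along the streamlines of $v$, i.e.\ along the connected components of the level sets of $\psi$ (these are the integral curves of $\nabla^\perp\psi$, which is tangent to them). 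The condition $v\cdot\nu=0$ on $\partial\Omega$ reads $\partial_\tau\psi=0$ there ($\tau$ the unit tangent), so $\psi$ is constant on the connected set $\partial\Omega$; normalize $\psi=0$ on $\partial\Omega$. Then $\nabla\psi$ is parallel to $\nu$ on $\partial\Omega$, so $|\partial_\nu\psi|=|\nabla\psi|=|v|$ is a nonzero constant there, and since $\partial_\nu\psi$ is continuous and nonvanishing it equals a constant $c\neq 0$.

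Next I would fix the sign of $\psi$ using hypothesis (2). Since $|v|=|\nabla\psi|$, the stagnation points of $v$ are precisely the critical points of $\psi$, so $p$ is the only one in $\Omega$. Because $|v|\neq 0$ on $\partial\Omega$ we have $\psi\not\equiv 0$, hence $\psi$ attains a nonzero extreme value at an interior point, which must be a critical point and therefore equals $p$; a second extreme value of opposite sign, or an interior zero of $\psi$ other than $p$, would produce a second critical point. Thus $\psi$ is single-signed in $\Omega$, and replacing $\psi$ by $-\psi$ if necessary we may assume $\psi>0$ in $\Omega$, with $\psi(p)=\max_{\overline{\Omega}}\psi=:M$ and $\psi<M$ on $\overline{\Omega}\setminus\{p\}$.

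Now I would promote ``$\omega$ constant on streamlines'' to ``$\omega$ is a function of $\psi$''. Since $p$ is the unique critical point and $\psi$ is positive in $\Omega$, for each $t\in(0,M)$ planar topology shows that $\{\psi>t\}$ is a simply connected domain whose boundary $\{\psi=t\}$ is a single Jordan curve: an extra component of $\{\psi>t\}$, or a bounded component of $\{\psi\leq t\}$ interior to $\Omega$, would carry another critical point. On such a curve $v$ is a nowhere-vanishing tangent field, so the curve is a single periodic orbit of the flow and $\omega$ is constant on it. Hence $\omega=g(\psi)$ for some $g\colon[0,M]\to\R$, which is continuous on $[0,M]$ (the level sets converge to $\partial\Omega$ as $t\to 0^+$ and to $\{p\}$ as $t\to M^-$) and of class $C^1$ on $[0,M)$ (away from $p$, $\psi$ is a submersion). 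Consequently $\psi$ is a positive solution of \eqref{first} with $f:=-g$, $\psi=0$ and $\partial_\nu\psi=c\neq 0$ on $\partial\Omega$. Once $f$ is known to be admissible, Serrin's theorem forces $\Omega$ to be a ball and $\psi$ to be radially symmetric about its center $x_0$; then $x_0$ is the only critical point of $\psi$, so $x_0=p$, and $v=\nabla^\perp\psi$ is the corresponding azimuthal (circular) vector field centered at $p$.

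The main obstacle is exactly this admissibility of $f=-g$: the moving-plane argument behind Serrin's theorem needs $f$ to be (locally) Lipschitz, and near the stagnation value $t=M$ this is not automatic. From $v\in C^2$ one only gets $\omega\in C^1(\overline{\Omega})$, and since $\nabla\psi$ vanishes at $p$ (possibly to high order, as the critical point is allowed to be degenerate) the relation $g'(\psi)\nabla\psi=\nabla\omega$ does not bound $g'$ near $M$; in fact $g$ can a priori be merely H\"older there. I would overcome this by a careful local analysis at $p$: using $-\Delta\psi=-g(\psi)$ together with the fact that $\psi$ has a strict maximum at $p$ to control the geometry of the level sets $\{\psi=t\}$ for $t$ close to $M$, and thereby upgrade the regularity of $g$ up to $t=M$ (alternatively one can run the moving-plane scheme with the nonlinearities obtained by truncating $g$ near $M$ and pass to the limit, or appeal to a version of Serrin's theorem valid for continuous nonlinearities). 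All the remaining ingredients — the topological description of the level sets away from $p$ and the periodic-orbit structure of the flow on each of them — are routine, so the whole difficulty is localized at the single stagnation point.
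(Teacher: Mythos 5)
The paper does not prove Theorem~\ref{teo}: it is quoted verbatim as \cite[Theorem 1.10]{hamel4} and is used only as motivation (to observe that hypothesis~(2) is necessary, via Theorem~\ref{0main}). There is therefore no in-paper proof to compare your argument against; I can only assess it on its own terms.

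Your reduction — pass to the stream function $\psi$ with $v=\nabla^\perp\psi$, show that $\psi$ is single-signed with a unique interior critical point $p$, deduce that the regular level sets are single Jordan curves so that $\omega=g(\psi)$, and then try to invoke Serrin's theorem for $-\Delta\psi=f(\psi)$ with $f=-g$ — is the natural first move, and you have correctly identified the genuine obstruction: the regularity of $g$ at the stagnation value $t=M$. But this is not a minor technicality you can wave away, and none of the three patches you sketch is actually carried out or obviously sound. Serrin's moving-plane argument genuinely uses (at least one-sided) Lipschitz control on $f$, and there is no general version of his theorem for merely continuous nonlinearities. Truncating $g$ near $M$ modifies the equation precisely in a neighborhood of $p$, so the truncated problem is no longer solved by $\psi$ on all of $\Omega$ and there is nothing to pass to the limit with. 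And the ``careful local analysis at $p$'' is exactly the hard part, not a routine check: if $p$ is a degenerate critical point of $\psi$ (which hypothesis~(2) does not exclude), the Hessian gives no control, and there is no a priori reason for $g$ to be Lipschitz up to $M$. This is a recognized difficulty in the Euler/stream-function literature, and the actual proof in \cite{hamel4} does not reduce to a bare application of Serrin's theorem but handles the level-set geometry and the behavior at the stagnation point more carefully. So what you have is a correct reduction and an accurate diagnosis of where the proof must work hardest, but the decisive step — getting from ``$g\in C^1[0,M)\cap C^0[0,M]$'' to a rigidity conclusion — is left open.
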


As a consequence of Theorem \ref{0main} we conclude that assumption (2) of Theorem \ref{teo} is necessary. Indeed, if $u$ is a solution as given by Theorem \ref{0main}, then $v= \nabla^{\perp} u $ solves \eqref{2D} and satisfies (1). Our solutions have several critical points, though, which correspond to stagnation points of the fluid. In the case of pentagonal symmetry, for instance, it is to be expected that our solution $u$ has at least 11 critical points (1 maximum, 5 minima and 5 saddle points). 

Moreover, if $u$ is as in Theorem \ref{0main}, the function:
$$ v(x)= \left \{ \begin{array}{ll} \nabla^{\perp} u(x) &  x \in \Omega \\ 0 & x \notin \Omega  \end{array} \right.$$
is an example of a compactly supported (discontinuous) weak solution of the Euler equations \eqref{2D} in the whole plane $\R^2$. See \cite{gs2} for a continuous (piecewise $C^1$) solution without radial symmetry.

The rest of the paper is organized as follows. In Section 2 we establish the notation and some preliminary results. We also give a more precise statement of our main result in Theorem \ref{main}. Section 3 is devoted to prove the existence of the family of radial solutions $u_\rho$ and to study their properties. We also study the behavior of the linearized operator with Dirichlet boundary conditions. The definition of the Dirichlet-to-Neumann operator $F_\rho$ and the computation of its linearization $DF_\rho$ is performed in Section 4. In Section 5 we show that the first eigenvalue of $DF_\rho$ crosses the $0$ value as $\rho$ varies. This allows us to finish the local bifurcation argument in Section 6. Finally, in the Appendix the assumption made on the symmetry group $G$ is studied.

\medskip 

{\bf Acknowledgment: } The author wishes to express his gratitude to Pieralberto Sicbaldi and Daniel Peralta Salas for bringing this problem to his attention, and also for many discussions on this and related topics. He also thanks Michael Widom for his gentle help in intepreting his paper \cite{widom}.

%
%
%
%
%
%
%
%
%

\section{Notations and preliminaries}

We denote by $B(R)\subset \R^N$ the ball centered at the origin of radius $R$, and $B=B(1)$. Analogously we define the unit sphere centered at the origin $\Sp$ and the sphere of radius $r$, $\Sp(r)$. We write $G$ to denote a symmetry group $G \subset O(N)$; we will say that a function $f$ (defined on a ball, or on $\Sp$) is $G$-symmetric if $f \circ g = f$ for any $g \in G$. In this paper we will consider only functions which are either radially symmetric or $G$-symmetric.

\medskip

Denote by $\lambda_i$ the eigenvalues of the laplacian operator on the unit ball $B$ with Dirichlet boundary conditions for $G$-symmetric functions, counted with multiplicity. We also denote by $\bar{\lambda}_i$ the Dirichlet eigenvalues associated to radial eigenfunctions. Of course we have

$$  0<\lambda_1 < \lambda_2 \leq \lambda_3 \dots, \quad \ 0< \bar{\lambda}_1 < \bar{\lambda}_2 < \bar{\lambda}_3\dots $$
$$ \lambda_1 = \bar{\lambda}_1, \ \lambda_i \leq \bar{\lambda}_i \ \mbox{ if } i >1.$$

In an analogous way, we denote by $\sigma_i$ and $\bar{\sigma}_i$ the eigenvalues of the laplacian operator on the unit ball with Neumann boundary conditions for $G$-symmetric and radial functions, respectively. There holds:

$$  0=\sigma_0<\sigma_1 \leq \sigma_2 \leq \sigma_3 \dots, \quad \ 0=\bar{\sigma_0}< \bar{\sigma}_1 < \bar{\sigma}_2 < \bar{\sigma}_3\dots $$
$$ \ \sigma_i \leq \bar{\sigma}_i \ \mbox{ if } i \geq 1.$$
It is known that:

$$ 0 = \bar{\sigma}_0 < \bar{\lambda}_1< \bar{\sigma}_1 < \bar{\lambda}_2<\bar{\sigma}_2 < \bar{\lambda}_3 \dots.$$

From now on, we will fix a symmetry group $G$ of $\mathbb{S}^{N-1}$ with the following property:
\begin{enumerate}[label=(G), ref=(G)]
	\item \label{G} Denote by $\sigma$ the first eigenvalue $\sigma_k$ with $\sigma \neq \bar{\sigma}_j$ for all $j \in \N$. We assume that $\sigma> \bar{\lambda}_2$ and has odd multiplicity.
\end{enumerate}
In other words, we impose that the first nonradial $G$-symmetric Neumann eigenfunction appear at an eigenvalue higher than $\bar{\lambda}_2$, and with odd multiplicity. We will show in the Appendix that symmetry groups $G$ satisfying this property are, for instance,
\begin{itemize}
	\item If $N=2$, the dihedral group $\mathbb{D}_k$ with $k \geq 5$.
	\item If $N=3$, the group $G$ of all isometries of the icosahedron.
	\item If $N=4$, the group $G$ of all rotations of the hyper-icosahedron.
\end{itemize}

\medskip 

Let us recall that the nontrivial eigenvalues of the Laplace-Beltrami operator on $\mathbb{S}^{N-1}$ have the expression $i(i+N-2)$, $i \in \N \cup $.
For later use, we denote by $\gamma_k$ the eigenvalues of the Laplace-Beltrami operator on $\mathbb{S}^{N-1}$ for $G$-symmetric eigenfunctions, counted with multiplicity. Obviously, $\gamma_k= i(i+N-2)$ for some $i = i(k) \in \N$. 
The associated $G$-symmetric eigenfunctions are denoted by $\vartheta_k$, and are normalized so that:
$$\int_{\Sp} \vartheta_k(\theta)^2 =1.$$

\medskip

We define the spaces of H\"{o}lder continuous functions:
\begin{align*}
	C^{k,\alpha}_{G}(B(R))&=\{u\in C^{k,\alpha}(B(R)): \ \ u \mbox{ is G-symmetric} \},
\end{align*}
\begin{align*}
	C^{k,\alpha}_{G,0}(B(R))&=\{u\in C^{k,\alpha}_G (B(R)): \ \ u=0 \mbox{ on } \partial B\},
\end{align*}
\begin{align*}
	C^{k,\alpha}_{G}(\mathbb{S}^{N-1})&=\{u\in C^{k,\alpha}(\mathbb{S}^{N-1}): \ u \mbox{ is G-symmetric}\}.
\end{align*}
We also define the set of functions in $C^{k,\alpha}_{G}(\Sp)$ whose mean is $0$:
\[ C^{k,\alpha}_{G,m}(\mathbb{S}^{N-1})= \left \{u\in C^{k,\alpha}_G(\mathbb{S}^{N-1}):\ \int_{\Sp} u=0 \right \}.	 \]

\medskip

In this paper we will prove the following result:
\begin{theorem}\label{main} Let $G \subset O(N)$ a symmetry group satisfying \ref{G}. Then there exists $\tilde{\rho}>0$, and sequences $\rho_n \to \tilde{\rho}$ and $w_n \in   C^{2,\alpha}_{G,m}(\mathbb{S}^{N-1})$, $w_n \to 0$, $w_n \neq 0$ such that the problem:
	\begin{equation} \label{eq}
		\begin{cases}
			-\rho_n \Delta u = u-(u^+)^3  &\mbox{in $\Omega_n$, }\\
			u=0 &\mbox{on $\partial\Omega_n$, }\\
			\partial_{\nu} u=\mbox{constant} &\mbox{on $\partial\Omega_n$, }
		\end{cases}
	\end{equation}
admits a sign-changing G-symmetric $C^2$ solution. Here $$\Omega_n = \{x \in \R^N \setminus\{0\}:\ |x| < 1 + w_n \big(\frac{x}{|x|} \big) \} \cup \{0\}.$$
\end{theorem}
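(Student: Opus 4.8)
The strategy is a local bifurcation argument in the parameter $\rho$. First I would construct, for $\rho$ in a suitable interval, a one-parameter family of sign-changing radial solutions $u_\rho$ to the Dirichlet problem $-\rho\,\Delta u = u - (u^+)^3$ in $B$, $u=0$ on $\partial B$. The natural candidate is a radial function that is positive on a central ball $B(r_0)$ and negative on the annulus $r_0 < |x| < 1$: on the inner region it solves an Allen--Cahn-type equation, on the outer region the linear equation $-\rho\,\Delta u = u$, and the two pieces are glued with $C^1$ matching at $|x| = r_0$. The existence of such a solution, together with its smooth dependence on $\rho$, should follow from an ODE shooting argument combined with the nondegeneracy of $u_\rho$ with respect to \emph{radial} perturbations of the linearized Dirichlet operator; this nondegeneracy is where the precise choice of nonlinearity (linear for $u<0$, superlinear for $u>0$) and the computation of $\bar\lambda_i$, $\bar\sigma_i$ in Section 2 enter. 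I would also need the asymptotic profile of $u_\rho$ as $\rho \to 0$: as $\rho \to 0$ the inner Allen--Cahn part concentrates (its nodal radius $r_0 = r_0(\rho)$ converges to some limit) and the outer linear part degenerates, and tracking this limit is what makes the later eigenvalue-crossing computation tractable.

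Next, assuming nondegeneracy of the linearized Dirichlet operator for \emph{nonradial} $G$-symmetric perturbations on a subinterval $I \subset (0,\bar\rho)$ (guaranteed because condition \ref{G} pushes the relevant nonradial spectrum above $\bar\lambda_2$, away from resonance), I would define the nonlinear Dirichlet-to-Neumann operator: for $w \in C^{k,\alpha}_{G,m}(\Sp)$ small, let $\Omega_w = \{|x| < 1 + w(x/|x|)\}$, solve the Dirichlet problem for $-\rho\,\Delta u = u - (u^+)^3$ on $\Omega_w$ with the branch issuing from $u_\rho$, and set $F_\rho(w) = \partial_\nu u|_{\partial\Omega_w} - \fint \partial_\nu u$, a map into $C^{k-1,\alpha}_{G,m}(\Sp)$; a zero of $F_\rho$ is exactly a solution of \eqref{eq}. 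One has $F_\rho(0)=0$ for all $\rho \in I$. The implicit function theorem applied to the Dirichlet problem on perturbed domains gives that $F_\rho$ is smooth; I would then compute $DF_\rho(0)$ explicitly by linearizing the domain perturbation (Hadamard-type formula), obtaining an operator that decomposes along the spherical harmonics $\vartheta_k$, so that its eigenvalues are functions $\mu_k(\rho)$ built from the radial linearized Dirichlet problem with the appropriate angular eigenvalue $\gamma_k$ inserted.

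The heart of the argument is to show that the first eigenvalue $\mu_1(\rho)$ of $DF_\rho(0)$ changes sign as $\rho$ ranges over $I$, with odd multiplicity at the crossing. Using the $\rho \to 0$ asymptotics of $u_\rho$ and the spectral inequalities $\bar\sigma_0 < \bar\lambda_1 < \bar\sigma_1 < \bar\lambda_2 < \cdots$ from Section 2, I would show $\mu_1(\rho) < 0$ near one end of $I$ and $\mu_1(\rho) > 0$ near the other — here condition \ref{G} is used crucially, because excluding the low nonradial Neumann harmonics (in particular the translation mode at the linear eigenvalue corresponding to $\bar\sigma_1$, and the one at $\bar\lambda_2$) is what allows the $\vartheta_1$-mode of $DF_\rho$ to be the first one and to become positive. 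Continuity then yields $\tilde\rho \in I$ with $\mu_1(\tilde\rho)=0$; the odd-multiplicity hypothesis in \ref{G} ensures the kernel of $DF_{\tilde\rho}(0)$ is odd-dimensional. This puts us in the hypotheses of the Krasnoselskii bifurcation theorem, producing sequences $\rho_n \to \tilde\rho$ and nonzero $w_n \to 0$ in $C^{k,\alpha}_{G,m}(\Sp)$ with $F_{\rho_n}(w_n)=0$; since the solutions bifurcate from the sign-changing $u_{\tilde\rho}$ they remain sign-changing, and the $G$-symmetry forces $\Omega_n$ to differ from a ball. That the bifurcating domains contain the origin and are smooth is a consequence of $w_n$ being small in $C^{k,\alpha}$.

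The main obstacle I anticipate is the eigenvalue-crossing step: one must show with enough precision — relying on the delicate $\rho\to 0$ concentration analysis of the Allen--Cahn part of $u_\rho$ — that the relevant eigenvalue of the Dirichlet-to-Neumann linearization genuinely crosses zero rather than merely touching it, and that no \emph{other} $G$-symmetric mode crosses simultaneously in a way that would break the odd-multiplicity count. Verifying that condition \ref{G} is satisfiable in dimensions $N=2,3,4$ (the dihedral groups $\mathbb{D}_k$, the icosahedral group, the hyper-icosahedral group) is a separate combinatorial/spectral task deferred to the Appendix.
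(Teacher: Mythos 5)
Your proposal follows essentially the same route as the paper: construct the radial family $u_\rho$ by gluing an Allen--Cahn-type inner solution to a linear outer solution, establish smooth dependence via nondegeneracy of the radial Dirichlet linearization, build the Dirichlet-to-Neumann operator on a subinterval where the $G$-symmetric Dirichlet linearization is invertible, show its first eigenvalue crosses zero with odd multiplicity using the $\rho\to 0$ asymptotics and condition \ref{G}, and conclude with Krasnoselskii. The only caveats are minor imprecisions (the nodal radius $p_\rho\to 1$ as $\rho\to 0$, so the inner region fills the ball rather than concentrating, and the Dirichlet-to-Neumann operator is undefined near $\rho=0$, so the negative sign of $\tau_1(H_\rho)$ must be obtained at the threshold $\rho_0$ of Dirichlet degeneracy rather than directly from the $\rho\to 0$ limit), but the overall strategy and every key ingredient match the paper's proof.
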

Theorem \ref{0main} follows from the previous one by a change of variables $x \mapsto \sqrt{\rho_n}x$. As commented in the introduction, Theorem \ref{main} holds true in any dimension provided that there exists a symmetry group $G$ satisfying \ref{G}. This is the case in dimensions 2, 3, and 4 at least. See the Appendix for more details.

We will need to make use of the Sobolev spaces of a domain $B(R)$ which are radially symmetric, or $G$-symmetric, which will be denoted as:

$$ H_{r}^s(B(R))= \{u \in H^s(B(R)):\  u \mbox{ is radially symmetric} \},$$
$$ H_{0,r}^s(B(R)) =\{u \in H_0^s(B(R)):\  u \mbox{ is radially symmetric} \},$$
$$ H_{G}^s(B(R))= \{u \in H^s(B(R)):\  u \mbox{ is G-symmetric} \},$$
$$ H_{0,G}^s(B(R)) =\{u \in H_0^s(B(R)):\  u \mbox{ is G-symmetric} \}.$$

Similar notation is used for Sobolev spaces on $\Sp$; for instance, we shall write $H_G^s(\Sp)$ to denote the Sobolev space of G-symmetric functions defined on the sphere.

In what follows we will make use of spaces of functions that are $L^2$-orthogonal to all radial functions. Being more specific, we define:

\begin{equation} \label{defE} E= \{ \phi \in H_{G}^1(B): \ \int_{B} \phi (x) g(x) \, dx=0 \ \ \forall \ g \in L^2_r(B) \}, \end{equation}

\begin{equation} \label{defE0} E_0= \{ \phi \in H_{0,G}^1(B): \ \int_{B} \phi (x) g(x) \, dx=0 \ \ \forall \ g \in L^2_r(B) \}. \end{equation}

We have the following characterization of the spaces $E$ and $E_0$.

\begin{lemma}\label{E} The following statements are equivalent:
	
	\begin{enumerate}
		\item[i)] $\phi \in E$ (alternatively, $\phi \in E_0$).
		\item[ii)] $\phi \in H_{G}^1(B)$ (alternatively, $\phi \in H_{0,G}^1(B)$) can be written as:
		$$ \phi = \sum_{k=1}^{+\infty} f_k(r) \vartheta_k(\theta).$$
		Here $(r, \theta) \in (0,1] \times \Sp$ are spherical coordinates and $\vartheta_k$ denote $G$-symmetric spherical harmonics. Observe that the $0$ order term is missing in the expansion.
		\item[iii)] $\phi \in H_{G}^1(B)$ (alternatively, $\phi \in H_{0,G}^1(B)$) satisfies that, for any $r \in (0,1]$,
		$$ \int_{\Sr} \phi (x) \, dx=0.$$

	\end{enumerate}

\end{lemma}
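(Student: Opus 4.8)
The plan is to prove the equivalences by a cycle of implications, using the standard spherical-harmonics decomposition of $H^1$ functions on the ball as the main tool. The key observation is that the orthogonality condition defining $E$ (and $E_0$) can be tested only against radial functions, and the orthogonality of spherical harmonics $\vartheta_k$ to constants on each sphere $\Sp(r)$ will convert between the three formulations.

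First I would establish (i) $\Leftrightarrow$ (iii). Write $\phi \in H^1_G(B)$ in spherical coordinates and, for fixed $r \in (0,1]$, set $\psi(r) = \int_{\Sp(r)} \phi(x)\, dx$; note $\psi$ is a well-defined $L^2$ function of $r$ by Fubini/trace considerations. For any $g \in L^2_r(B)$, identified with a function $g(r)$, one has $\int_B \phi(x) g(x)\, dx = \int_0^1 g(r) \psi(r) r^{N-2}\, dr$ (up to the surface-measure constant), so $\int_B \phi g = 0$ for all radial $g$ is equivalent to $\psi(r) = 0$ for a.e.\ $r$, and then for all $r \in (0,1]$ by continuity of the trace in $r$ for $H^1$ functions. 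This gives (i) $\Leftrightarrow$ (iii), and the same argument works verbatim in $H^1_{0,G}(B)$ for $E_0$.

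Next I would establish (ii) $\Leftrightarrow$ (iii). Expand $\phi(r,\theta) = \sum_{k=0}^\infty f_k(r) \vartheta_k(\theta)$ in the $L^2(\Sp)$-orthonormal basis of $G$-symmetric spherical harmonics, where $\vartheta_0$ is the constant; here $f_k(r) = \int_{\Sp} \phi(r,\theta) \vartheta_k(\theta)\, d\theta$ and this expansion converges in $H^1(B)$ (standard fact; one can cite that the $\vartheta_k$ are eigenfunctions of the Laplace--Beltrami operator and the partial sums are bounded in $H^1$). Then $\int_{\Sp(r)} \phi(x)\, dx$ is, up to the normalization constant $\vartheta_0$, exactly $f_0(r)$. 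Hence (iii) — that this integral vanishes for all $r$ — is equivalent to $f_0 \equiv 0$, i.e.\ to the expansion in (ii) having no zeroth-order term. For the $E_0$ case one additionally notes that $\phi \in H^1_{0,G}(B)$ forces each $f_k(1) = 0$, so the decomposition stays inside $H^1_{0,G}(B)$; this is where one uses that the trace commutes with the spherical-harmonic projection.

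The argument is essentially bookkeeping with orthonormal expansions, so there is no deep obstacle; the one point requiring a little care is the justification that the spherical-harmonic expansion of a generic $H^1_G(B)$ function converges \emph{in the $H^1$ norm} (not merely in $L^2$), and relatedly that slicing $\phi$ by spheres $\Sp(r)$ and integrating is legitimate pointwise in $r$ rather than just a.e.\ — both handled by the continuity of $r \mapsto \phi(r,\cdot)$ as an $H^{1/2}(\Sp)$-valued (in particular $L^2(\Sp)$-valued) map, which follows from $\phi \in H^1(B)$. Once these regularity technicalities are in place, the three conditions are seen to be restatements of the single condition $f_0 \equiv 0$, completing the proof.
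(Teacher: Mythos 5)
Your proof is correct and follows essentially the same route as the paper's: the spherical-harmonic Fourier decomposition of $\phi$, combined with Fubini's theorem, to show that all three conditions reduce to the vanishing of the zeroth Fourier coefficient $f_0$. The only difference is organizational (you prove (i) $\Leftrightarrow$ (iii) and (ii) $\Leftrightarrow$ (iii), while the paper runs the cycle i) $\Rightarrow$ ii) $\Rightarrow$ iii) $\Rightarrow$ i)), and you spell out a couple of regularity points (continuity of $r \mapsto \phi(r,\cdot)$ in $L^2(\Sp)$, $H^1$ convergence of the expansion) that the paper leaves implicit; neither change alters the substance of the argument.
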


\begin{proof}
	
	We begin by showing $i) \Rightarrow ii)$. If $\phi \in H^1_{G}(B)$, it is well known that it can be expanded via Fourier series in spherical coordinates $(r, \theta) \in (0,1) \times \Sp$:
	
	$$\phi(x) = \frac{1}{\sqrt{\o_N}} \, f_0(r)  + \sum_{k=1}^{+\infty} f_k(r) \vartheta_k(\theta), \ \ \ \o_N =|\Sp|.$$
	
	Moreover,
	$$ \| \phi \|_{H^1}^2 = \sum_{k=1}^{\infty} \int_0^1 r^{N-1} \Big (f_k'(r)^2 + f_k(r)^2  +  \frac{\gamma_k^2}{r^2} f_k(r)^2\Big )\, dr.$$
	
	In particular $f_0(x) = f_0(|x|)$ belongs to $H_r^1(B)$. Taking any $g \in L^2_r(B)$, we have that by Fubini Theorem,
	
	\begin{align*}  0& =\int_B \phi(x) g(x) \, dx = \int_0^1 g(r) \left [ f_0(r)  + \sum_{k=1}^{+\infty} f_k(r) \int_{\Sr} \vartheta_k(\theta) \, d \theta \right ] \, dr \\ & = \int_0^1 g(r) f_0(r) \, dr.\end{align*}
	
Since $g$ is arbitrary, we conclude that $f_0=0$, and $ii)$ holds.
	
	\medskip 
	
	The proof of  $ii) \Rightarrow iii)$ is immediate, since
	$$ 	 \int_{\Sr} \phi (x) \, dx= \sum_{k=1}^{+\infty} f_k(r) \int_{\Sr} \vartheta_k(\theta) \, d \theta=0.$$
	
	\medskip 
	
	Finally, the proof of $iii) \Rightarrow i)$ follows from the Fubini theorem, since for any $g \in L_r^2(B)$,
	$$ \int_{B} \phi(x) g(x) \, dx = \int_0^1 g(r) \Big (\int_{\Sr} \phi(x) \Big) \, dr =0.$$

\end{proof}

\section{The family of radial solutions}

In this section we build a family of sign-changing radially symmetric solutions to the problem:
\begin{equation}\label{ball}
	\begin{cases}
		- \rho \, \Delta u = u - (u^+)^3 & \mbox{ in }B, \\
		u=0 &\mbox{on }\partial B. \end{cases}
\end{equation}
Here $\rho$ is a real positive parameter in a interval to be specified. We are also concerned with the properties of this family such as smoothness and the behavior of the linearized operator under homogeneous Dirichlet boundary conditions. This family of solutions will be used to find nonradial solutions to \eqref{eq} as a local bifurcation. 

\medskip Sometimes it will be useful to use the change of variables:

\begin{equation} \label{change} v_R(r) = u_\rho \big (\sqrt{\rho} \, r \big ), \ \ R= \rho^{-1/2}. \end{equation} In this way, equation \eqref{ball} is equivalent to:

\begin{equation}\label{ball2}
	\begin{cases}
		- \Delta v = v - (v^+)^3 & \mbox{ in }B(R), \\
		u=0 &\mbox{on }\partial B(R). \end{cases}
\end{equation}

In next proposition we show the existence of solutions to \eqref{ball} and their basic properties. Recall the definition of $\bar{\lambda}_2$ given at the beginning of Section 2.

\begin{proposition} \label{radial} For any $\rho \in (0, \bar{\lambda}_2^{-1})$ there exists a radial sign-changing $C^{4,\alpha}$ solution $u_\rho$ of \eqref{ball} satisfying that:
	
\begin{enumerate}
	\item[a)] The function $u_\rho(r)$ has a unique zero at a point $p_\rho \in (0,1)$ and 
	$$ \left \{  \begin{array}{ll}  u_\rho(r)>0 & \mbox{ for } r \in [0, p_\rho), \\ u_\rho(r)<0 & \mbox{ for } r \in (p_\rho,1). \end{array} \right.$$
	Moreover, 
	\begin{equation} \label{constant} c_\rho=u_\rho'(1) >0. \end{equation}
	
	\item[b)] The radial solution of \eqref{ball} satisfying a) is unique.

	\item[c)] If $\rho \to \bar{\lambda}_2^{-1}$, $ u_{\rho} \to 0$ in $C^4$ norm 
	\item[d)] If $\rho \to 0$, then $u_\rho \to 1$ in compact sets of $B$, in $C^4$ sense. Moreover, if we make the change of variables \eqref{change} and define: 
	\begin{equation}  \tilde{v}_R(r) = v_R (r - p_R), \ \ p_R=  \rho^{-1/2} p_\rho, \end{equation} 
	we have that $ \tilde{v}_R \to \tilde{v}_0$ in $C^4$ sense in compact sets of $(-\infty, \pi)$, where \begin{equation} \label{v0} \tilde{v}_0(r)= \left \{ \begin{array}{ll} - \tanh(\frac{r}{\sqrt{2}}) & r \leq 0, \\ - \frac{1}{\sqrt{2}} \sin(r) & r \in (0,\pi]. \end{array} \right. \end{equation}
	
\end{enumerate}

\end{proposition}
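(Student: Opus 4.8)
The plan is to construct the family $u_\rho$ by ODE shooting and then extract the asymptotic information from two different rescalings, one as $\rho \to \bar\lambda_2^{-1}$ and one as $\rho \to 0$. For the existence part a), I would fix $\rho$ and look for a radial solution $u(r)$ of $-\rho\,(u'' + \frac{N-1}{r}u') = u - (u^+)^3$ with $u'(0)=0$, $u(1)=0$, parametrized by the shooting value $a = u(0)>0$. For $a$ slightly larger than $0$ the equation near $u=0^+$ is essentially $-\rho \Delta u = u$ (since the cubic is lower order), so the solution behaves like a multiple of the first Dirichlet eigenfunction on a ball of radius $\rho^{-1/2}\sqrt{\lambda_1}$; by taking $a$ small the first zero of $u$ occurs far outside $B$, so $u>0$ on $[0,1]$ and $u(1)>0$. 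On the other hand, for $a$ large the Allen–Cahn structure of $f$ on $u>0$ (with $f(1)=0$, $f'(1)=-2<0$) forces $u$ to develop a fast transition layer and cross zero well inside $B$; once $u<0$ the equation is the \emph{linear} one $-\rho\Delta u = u$, whose radial solutions (Bessel functions) oscillate, and a continuity/intermediate-value argument in $a$ produces a value for which $u(1)=0$ exactly while $u$ has a single interior zero $p_\rho$. The condition $\rho<\bar\lambda_2^{-1}$, i.e.\ $R=\rho^{-1/2}>\sqrt{\bar\lambda_2}$, is exactly what guarantees the radius is large enough to fit "one positive bump, one sign change, then negative up to the boundary" but is used more essentially to pin down which Bessel branch we land on; the sign $c_\rho = u_\rho'(1)>0$ follows because $u_\rho<0$ just inside the boundary and $u_\rho(1)=0$, so $u_\rho$ is increasing there. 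Working in the rescaled variable \eqref{change}, i.e.\ on \eqref{ball2}, makes the $\rho$-independence of the nonlinearity manifest and is the natural setting for the shooting.

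For uniqueness b), I would argue on \eqref{ball2}: among radial solutions with exactly one interior zero $p$ and $v(0)>0$, the negative part solves $-\Delta v = v$ on the annulus $\{p<|x|<R\}$ with $v(p)=0=v(R)$, which forces $v$ on that annulus to be a specific Bessel-type profile determined by $p$ (there is a one-parameter family indexed by the slope at $p$), and the positive part solves the Allen–Cahn-type equation on $\{|x|<p\}$ with $v=0$ on the inner boundary. Matching $C^1$ at $r=p$ gives a scalar equation for $p$; the monotonicity of the relevant shooting map (coming from the fact that on $u<0$ the problem is linear, hence rigid, and on $u>0$ the map $a \mapsto (\text{first zero}, \text{slope there})$ is strictly monotone by a standard comparison/Sturm argument for the autonomous-type radial ODE) yields a unique $p_\rho$, hence a unique $u_\rho$. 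Smoothness $C^{4,\alpha}$ is immediate from elliptic regularity since $f(u)=u-(u^+)^3$ is $C^{1,1}$ and the solution, being bounded, has right-hand side in $C^{1,\alpha}$, which bootstraps to $u \in C^{3,\alpha}$, and one more step (using that $u$ does not vanish to high order on the nodal sphere, where the right-hand side is actually smooth on each side and $C^{1,1}$ globally) gives $C^{4,\alpha}$.

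The limit c) $\rho\to\bar\lambda_2^{-1}$: here $R\to\sqrt{\bar\lambda_2}$ from above. I would show $\|u_\rho\|_\infty\to 0$; then the cubic term is negligible and $u_\rho/\|u_\rho\|_\infty$ converges to a radial Dirichlet eigenfunction on $B(\sqrt{\bar\lambda_2})$ with one interior zero — that is precisely the second \emph{radial} eigenfunction, which is why $\bar\lambda_2$ is the critical value: as $R\downarrow\sqrt{\bar\lambda_2}$ there is no longer room for a sign-changing linearized solution, so the amplitude must collapse to $0$. To get the blow-up rate and confirm $u_\rho\to 0$ in $C^4$ I would expand: writing $u_\rho = \varepsilon \psi + o(\varepsilon)$ with $\varepsilon\to 0$, plugging into the equation and projecting onto the eigenfunction gives a solvability (Lyapunov–Schmidt) relation forcing $\varepsilon \sim C(\bar\lambda_2^{-1}-\rho)^{1/2}$, and $C^4$ convergence follows from elliptic estimates applied to the equation for $u_\rho$. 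The limit d) $\rho\to 0$ is the delicate one and I expect it to be \textbf{the main obstacle}. As $\rho\to 0$ ($R\to\infty$) the positive bump fills almost all of $B$ at height $\approx 1$ (the stable zero of $f$ on $u>0$), so $u_\rho\to 1$ locally uniformly; elliptic regularity upgrades this to $C^4$ on compact subsets. The interesting structure is the transition layer near the nodal sphere $|x|=p_\rho$: after the change \eqref{change} and recentering $\tilde v_R(r)=v_R(r-p_R)$, in the moving frame the Laplacian's curvature term $\frac{N-1}{r}\partial_r$ tends to $0$ (since $r\approx p_R\to\infty$), so $\tilde v_R$ converges to a solution of the \emph{one-dimensional} problem $-\tilde v'' = \tilde v-(\tilde v^+)^3$; on $r<0$ this is the standard Allen–Cahn heteroclinic from $1$ to $0$, namely $-\tanh(r/\sqrt2)$ (normalized so that it vanishes at $r=0$ and tends to $-1$... wait, to $+1$ as $r\to-\infty$: $-\tanh(r/\sqrt2)\to 1$), and for $r>0$ it solves the linear equation $-\tilde v''=\tilde v$ with $\tilde v(0)=0$, $\tilde v'(0)=$ the matched layer slope $-1/\sqrt2$, giving $\tilde v_0(r) = -\frac{1}{\sqrt2}\sin r$; this profile hits zero again at $r=\pi$, which is why the convergence is only claimed on compact subsets of $(-\infty,\pi)$ (that second zero corresponds to $\partial B(R)$ receding to infinity along the rescaled negative region, and beyond $\pi$ the limit problem would change). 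The technical work here is: (i) a priori control that $p_\rho$ stays bounded away from $0$ and $1$ with $\rho^{-1/2}p_\rho\to\infty$ and $\rho^{-1/2}(1-p_\rho)\to\pi^-$ appropriately — this requires a careful energy/Bessel analysis of the negative linear region and an energy (Modica-type) bound for the layer — and (ii) passing to the limit in the recentered equation with uniform $C^{4,\alpha}_{loc}$ bounds via Schauder estimates, plus a uniqueness argument for the limiting 1D profile to identify it as \eqref{v0}.
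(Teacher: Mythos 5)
Your shooting argument for part~(a) has a genuine scaling error that reverses the intermediate-value endpoints. For $a=u(0)$ close to $0$, the linearized equation $-\rho\Delta u = u$ with $u(0)>0$, $u'(0)=0$ first vanishes at $r=\sqrt{\rho\,\lambda_1}$, \emph{not} at $\rho^{-1/2}\sqrt{\lambda_1}$: the first Dirichlet eigenvalue of $B(r_0)$ is $\lambda_1/r_0^2$, and setting this equal to $\rho^{-1}$ gives $r_0=\sqrt{\rho\lambda_1}$. Since $\rho<\bar\lambda_2^{-1}<\bar\lambda_1^{-1}=\lambda_1^{-1}$, this zero lies strictly inside $B$. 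So for small $a$ the shooting solution crosses zero well \emph{inside} $B$, and once negative it continues along the linear Bessel profile, possibly crossing zero again before $r=1$ (the number of interior zeros and the sign of $u(1;a)$ depend on which interval $(\bar\lambda_j^{1/2},\bar\lambda_{j+1}^{1/2})$ contains $\rho^{-1/2}$). Conversely, as $a\to 1^-$ the solution clings to the unstable equilibrium $u\equiv1$ and its first zero recedes to infinity, so $u>0$ on $[0,1]$ — opposite to what you asserted. Beyond the sign flip, the real problem is that $u(1;a)$ alone is not a good IVT quantity: one must also control the nodal count of $u(\cdot;a)$, and you give no mechanism for that. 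The paper sidesteps this entirely by gluing: it fixes the outer linear piece $w$ on $B(R)$ ($R=\rho^{-1/2}$) with $w(R)=0$, $w'(R)=1$, locates its zero $p_R\in(\bar\lambda_1^{1/2},R)$ by Sturm separation with the second radial eigenfunction, builds the positive part on $B(p_R)$ as a nontrivial minimizer of the Allen--Cahn energy $\int|\nabla z|^2-z^2+\tfrac12 z^4$ (nontrivial because $p_R>\bar\lambda_1^{1/2}$, positive via $|z|$ and the maximum principle), and then rescales $w$ by a constant to match $C^1$ at $p_R$; the one-sign-change structure and $u_\rho'(1)>0$ are then immediate.

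For the remaining parts your route differs from the paper's but is less complete than you suggest. In (b) you gesture at shooting monotonicity where the paper simply quotes Brezis--Oswald for uniqueness of the positive solution on $B(p_R)$. In (c) your Lyapunov--Schmidt expansion is never justified — it presupposes both a uniform a~priori bound and that $u_\rho$ already lies near the eigenspace — whereas the paper first proves uniform $C^{4,\gamma}$ bounds via the decreasing radial Hamiltonian $H(r)=v'^2+v^2-\tfrac12(v^+)^4$ together with $v<1$, extracts a subsequential $C^4$ limit $u_0$, and rules out $u_0\neq0$ by an eigenvalue comparison ($u_0$ would be a non-principal null eigenfunction of $-\Delta-\bar\lambda_2+\bar\lambda_2(u_0^+)^2$, impossible because $0$ is already the first non-principal eigenvalue of $-\Delta-\bar\lambda_2$). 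Your (d) is essentially the paper's argument, and the a~priori control you flag as the main obstacle is exactly what the $H(r)$ bound and the uniform spacing of Bessel zeros (to bound $R-p_R$) deliver.
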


\begin{figure}[h]
	\centering 
	\begin{minipage}[c]{100mm}
		\centering
		\resizebox{100mm}{55mm}{\includegraphics{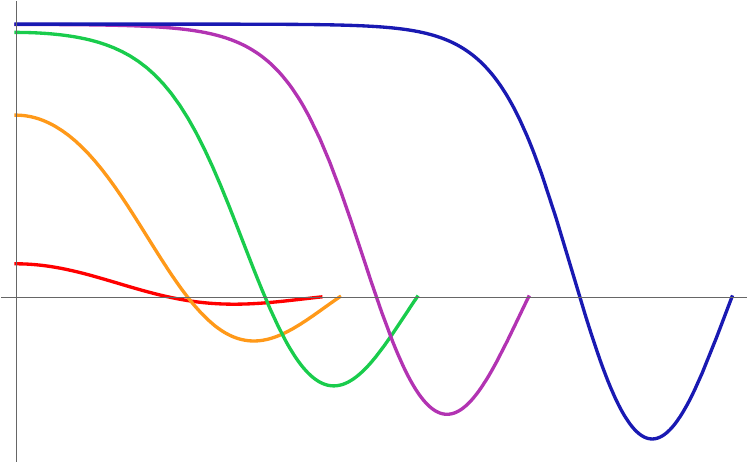}}
	\end{minipage}
	\caption{The numerical plot of the functions $v_R$ for different values of $R$ in dimension 3, using Mathematica.}
\end{figure}

\begin{proof} For the proof of a), let us perform the change of variables \eqref{change}. If $R= \bar{\lambda}_2^{1/2}$, $1$ is the second eigenvalue for radial functions of the Laplacian on $B(R)$ with Dirichlet boundary conditions; let us denote $\bar{\psi}_2(r)$ the corresponding eigenfunction. Moreover, its unique zero is achieved at $\bar{\lambda}_1^{1/2}$.

Let us now consider $R > \bar{\lambda}_2^{1/2}$, and the initial value problem:
\begin{equation*}
	\begin{cases}
		- w''(r) -(N-1) \frac{w'(r)}{r} = w(r) , \\
		w(R)=0, w'(R)=1.\end{cases}
\end{equation*}
By Sturm separation of zeroes applied to $w$ and $\bar{\psi}_2$, we conclude that $w$ becomes zero at a unique point $p_R \in (\bar{\lambda}_1^{1/2}, R)$. We emphasize that the first eigenvalue of the Laplace operator with Dirichlet boundary conditions in $B(p_R)$ is strictly smaller than 1.

We now study the energy functional:

$$F: H_0^1(B(p_R)) \to \R, \ F(z)= \int_{B(p_R)} |\nabla z|^2 - z^2 + \frac{1}{2} z^4.$$

It is straightforward to prove that $F$ is weak lower semi-continous and coercive, and then it achieves a minimum. Of course $0$ is a critical point of $F$ but it does not correspond to a local minimum since $p_R>\bar{\lambda}_1^{1/2}$. As a consequence, the minimizer of $F$ is nontrivial; let us denote it by $z$. Observe now that $|z|$ is also a minimizer, and hence a solution. By the maximum principle, we can assume that $z$ is strictly positive. By Hopf lemma, $z'(p_R)<0$.

We can then define $v_R(r)$ as follows:

$$ v_R(r)= \left \{ \begin{array}{ll} z(r) & r \in [0, p_R], \\ t w(r) & r \in (p_R, R] \end{array} \right.$$

Here $t>0$ is chosen such that $v_R(r)$ is $C^1$ at $p_R$. The $C^{4, \alpha}$ regularity follows from Schauder estimates. Observe that 

$$ p_R= R p_\rho= \rho^{-1/2} p_\rho.$$

\medskip For the proof of b), assume that $u_1$, $u_2$ are two radial solutions of \eqref{ball} satisfying a). By the change of variables \eqref{change}, we are led with two solutions $v_1$ and $v_2$ of \eqref{ball2}. By a), there exist $p_1$ and $p_2$ such that $v_i(r)<0$ for $r \in (p_i,R)$, $i=1,\ 2$. Recall that \eqref{ball2} is linear for $v<0$: then, $p_1=p_2=p_R$ and $v_1 = \alpha v_2$ for any $r >p_R$ and some $\alpha >0$. In order to conclude it suffices to show that the positive solution of:

\begin{equation*}
	\begin{cases}
		- \Delta v = v - v^3 & \mbox{ in }B(p_R), \\
		u=0 &\mbox{on }\partial B(p_R), \end{cases}
\end{equation*}
is unique. But this is known, even for nonsymmetric solutions and domains, see for instance \cite{bo}.

\medskip

The proof of c) and d) need some preliminaries. We first show that $\|v\|_{C^{4,\gamma}}$ is uniformly bounded, where $v$ is any solution to \eqref{ball2}. To prove that, first observe that $v(r) <1$ for any $r$ by the maximum principle. Observe moreover that:
$$H(r) = v'(r)^2 + v(r)^2 - \frac 1 2 (v(r)^+)^4$$
is decreasing in $r$ for any solution of \eqref{ball2}. Evaluating $H(0)$ and $H(r)$ we obtain that, if $v(r)<0$,
$$ H(r)= v'(r)^2 + v(r)^2 \leq H(0) < \frac{1}{2}.$$
Once we have boundedness in $L^{\infty}$, the $C^{4,\gamma}$ bounds follow from standard elliptic local regularity estimates.

\medskip Observe moreover that since ${v}_R$ solves a linear equation for $r>p_R$, it follows that $R - p_R$ is bounded. This follows from the uniform separation of consecutive zeroes of Bessel functions. In particular, $p_R \to +\infty$ as $R \to +\infty$. 

\medskip 
We now prove c), so consider $\rho > \bar{\lambda}_2^{-1}$, $\rho \to \bar{\lambda}_2^{-1}$. By the above discussion we can assume that $u_\rho \to u_0 $ in $C^4$ sense, up to a subsequence. Moreover, $u_0$ is a solution of
 \begin{equation*}
 	\begin{cases}
 		- \Delta u_0 = \bar{\lambda}_2 u_0 - \bar{\lambda}_2  (u_0^+)^3 & \mbox{ in }B, \\
 		u=0 &\mbox{on }\partial B, \end{cases}
 \end{equation*}
 
We now distinguish two cases. If $u_0^+ =0$, then $u_0$ is a nonnegative solution of the linear problem $	- \Delta u_0 = \bar{\lambda}_2 u_0$, which implies that $u_0=0$.

Assume now that $u_0^+$ is not trivial. We point out that $p_\rho$ does not approach $1$, and then $u_0(r)$ vanishes at a point $p \in (0,1)$, limit of the sequence $p_\rho$. Then $u_0$ is a non-principal eigenfunction of the operator $-\Delta - \bar{\lambda}_2 + \bar{\lambda}_2(u_0^+)^2$ with $0$ eigenvalue. However, $0$ is the first non-principal eigenvalue of the operator $-\Delta - \bar{\lambda}_2 $, and this is impossible by the monotonicity of the eigenvalues with respect to the potential.

%

%
%

\medskip We now turn our attention to the statement d). Define:

$$ \tilde{v}_R(r)=  v_R(\cdot - p_R).$$

Observe that $\tilde{v}_R$ is uniformly bounded in $C^{4,\gamma}$ sense. Moreover, by the maximum principle it is decreasing for $r\in (-p_R,0)$, and it solves:

\begin{equation*}
	\begin{cases}
		- \tilde{v}_R''(r) -(N-1)\,  \displaystyle \frac{\tilde{v}_R'(r)}{r + p_R} = \tilde{v}_R(r) - (\tilde{v}_R^+(r))^3 , \\
		\tilde{v}_R(0)=0.\end{cases}
\end{equation*}

Then, in compact sets of $(-\infty, \pi)$ the sequence $\tilde{v}_R$ converges to $\tilde{v}_0$ in $C^4$ sense to the solution of the problem:

\begin{equation*}
	\begin{cases}
		- \tilde{v}_0''(r) = \tilde{v}_0(r) - (\tilde{v}_0^+(r))^3 , r \in (-\infty, \pi)\\
	\tilde{v}_0(0)=0.\end{cases}
\end{equation*}

Moreover $\tilde{v}_0$ is decreasing for all $r<0$, and it is well known that necessarily $\tilde{v}_0(r)=- \tanh(\frac{r}{\sqrt{2}})$ for $r<0$. Then we conclude that $\tilde{v}_0$ is as in \eqref{v0}. The fact that $u_\rho \to 1$ in compact sets of $B$ follows from the above description and since $\tilde{v}_R$ is decreasing for $r<0$.

\end{proof}

In the following proposition we study the linearized operator of the solution $u_\rho$ under Dirichlet boundary conditions. We will consider both the radially symmetric and the $G$-symmetric case. We denote by $L:H_{0,G}^1(B) \to (H_{0,G}^1(B))^{-1}$ the linearized operator:
\begin{equation} \label{L}  L= - \rho \Delta - 1 + 3(u^+_{\rho})^2.\end{equation}
We will consider the eigenvalue problems:

\begin{equation*} L (\varphi) = \mu_k \varphi, \ \varphi \in H_{0,G}^1(B), \end{equation*}

\begin{equation*}  L (\varphi) = \bar{\mu}_k \varphi, \ \varphi \in H_{0,r}^1(B). \end{equation*}

We also denote by $\varphi_k$, $\bar{\varphi}_k$ the associated eigenfunctions. Clearly, $\mu_1 = \bar{\mu}_1$ is simple, and $\varphi_1$ is positive. We also define the quadratic form associated $Q_D:H_{0,G}^1(B) \to \R,$
\begin{equation} \label{QD} Q_D(\phi) = \int_{B} \rho |\nabla \phi|^2 - \phi^2 + 3 (u_{\rho}^+)^2 \phi^2.\end{equation}


We denote by $\bar{Q}_D$ the restriction of $Q$ to radial functions:
$$\bar{Q}_D = Q_D|_{H_{0,r}^1(B)}.$$ 

\begin{proposition}\label{linear1} The following holds:
\begin{enumerate}
	\item[a)] For any $\rho \in (0, \bar{\lambda}_2^{-1})$ the function $u_\rho$ is nondegenerate and has Morse index 1 in $H_{0,r}^1(B)$. In other words, $\bar{\mu}_1 <0 < \bar{\mu}_2$. 
	
	\item[b)] The map $U: (0, \bar{\lambda}_2^{-1}) \to C^4(B)$, $U(\rho)= u_\rho$ is $C^1$.
	
	\item[c)] There exists $\e>0$ such that for $\rho \in (\bar{\lambda}_2^{-1}-\e,\bar{\lambda}_2^{-1})$, $\mu_2>0$.
	
	\item[d)] There exists $\e>0$ such that for $\rho \in (0, \e)$, $\mu_2<0$.

\end{enumerate}	

\end{proposition}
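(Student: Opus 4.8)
I treat the four items in turn; throughout write $Q_D(\varphi)=\langle L\varphi,\varphi\rangle$ for the quadratic form associated with $L$ and $A_\rho=\{p_\rho<|x|<1\}$. For \emph{(a)} I establish $\bar\mu_1<0$ and $\bar\mu_2>0$ by separate arguments. On $B(p_\rho)$, where $u_\rho>0$, one has $(-\rho\Delta-1+u_\rho^2)u_\rho=0$ with $u_\rho>0$, so $u_\rho$ is the first Dirichlet eigenfunction there of $-\rho\Delta-1+u_\rho^2$ with eigenvalue $0$; since $L=(-\rho\Delta-1+u_\rho^2)+2u_\rho^2$ and $2u_\rho^2>0$, the first Dirichlet eigenvalue of $L$ on $B(p_\rho)$ is strictly positive. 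On $A_\rho$, where $u_\rho<0$ solves the linear equation, $u_\rho^-$ (extended by $0$) is the first Dirichlet eigenfunction of $L=-\rho\Delta-1$ with eigenvalue $0$; testing the equation for $u_\rho$ with $u_\rho^-$ gives $Q_D(u_\rho^-)=0$, so $\bar\mu_1\leq0$, and perturbing by a nonnegative radial bump $\eta$ concentrated near $\{|x|=p_\rho\}$ one finds $Q_D(u_\rho^-+t\eta)=2t\,\rho\,u_\rho'(p_\rho)\int_{\{|x|=p_\rho\}}\eta\,dS+t^2Q_D(\eta)<0$ for small $t>0$ (using $u_\rho'(p_\rho)<0$ from the Hopf lemma), so $\bar\mu_1<0$. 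For $\bar\mu_2>0$: by Sturm oscillation the second radial Dirichlet eigenfunction $\bar\varphi_2$ of $L$ has exactly one interior zero $r_0$, so $\bar\varphi_2$ restricted to $B(r_0)$ and to $\{r_0<|x|<1\}$ are the first Dirichlet eigenfunctions of $L$ on those domains, both with eigenvalue $\bar\mu_2$; if $r_0\leq p_\rho$ domain monotonicity gives $\bar\mu_2\geq\mu_1^{\mathrm{Dir}}(L,B(p_\rho))>0$, and if $r_0>p_\rho$ strict domain monotonicity gives $\bar\mu_2>\mu_1^{\mathrm{Dir}}(L,A_\rho)=0$. Either way $\bar\mu_2>0$, which proves (a) (in particular $0$ is not a radial Dirichlet eigenvalue and the radial Morse index is $1$). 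For \emph{(b)} I apply the implicit function theorem to $\Phi(\rho,u)=\rho\Delta u+u-(u^+)^3$ on $(0,\bar\lambda_2^{-1})\times C^{2,\alpha}_{0,r}(B)$: $\Phi$ is $C^1$ because $t\mapsto(t^+)^3$ is $C^1$ with globally Lipschitz derivative, and $D_u\Phi(\rho,u_\rho)=-L$ has trivial kernel in $H^1_{0,r}(B)$ by (a), hence is an isomorphism onto $C^{0,\alpha}_r(B)$; differentiating the equation and bootstrapping with Schauder estimates upgrades $\rho\mapsto u_\rho$ to a $C^1$ map into $C^4(B)$.

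\textbf{Part (c).} As $\rho\to\bar\lambda_2^{-1}$ we have $u_\rho\to0$ in $C^4$ (Proposition~\ref{radial}(c)), so $L\to-\bar\lambda_2^{-1}\Delta-1$ and all the $G$-symmetric Dirichlet eigenvalues of $L$ converge. By condition~\ref{G} together with the interlacing of the Dirichlet and Neumann spectra of the ball within a fixed angular momentum, the first $G$-symmetric Dirichlet eigenvalue of $-\Delta$ on $B$ carried by a nonradial eigenfunction, say $\lambda^{nr}$, satisfies $\lambda^{nr}>\sigma>\bar\lambda_2$; hence the first nonradial $G$-symmetric eigenvalue of $L$ tends to $\bar\lambda_2^{-1}\lambda^{nr}-1>0$ and stays bounded away from $0$ near $\rho=\bar\lambda_2^{-1}$. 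Combined with $\bar\mu_2>0$ from (a), $\mu_2=\min(\bar\mu_2,\text{first nonradial }G\text{-symmetric eigenvalue})>0$ for $\rho$ in a left neighborhood of $\bar\lambda_2^{-1}$.

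\textbf{Part (d): the crossing.} By separation of variables a nonradial $G$-symmetric eigenfunction of $L$ has the form $f(r)\vartheta_k(\theta)$ with $\vartheta_k$ a $G$-symmetric spherical harmonic of eigenvalue $\gamma_k$, and $f$ solves the reduced equation with operator $L_{\gamma_k}=-\rho\big(\partial_r^2+\tfrac{N-1}{r}\partial_r\big)+\rho\tfrac{\gamma_k}{r^2}-1+3(u_\rho^+)^2$ on $(0,1)$, Dirichlet at $r=1$; since $\rho\gamma_k/r^2$ is a positive potential increasing in $\gamma_k$, it suffices to make the first eigenvalue of $L_{\gamma_{k_0}}$ negative, $\gamma_{k_0}$ being the smallest nonzero $G$-symmetric value. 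Rescaling about the interface, $r=p_\rho+\sqrt\rho\,s$, and using Proposition~\ref{radial}(d) (so $p_\rho\to1$, $\sqrt\rho^{-1}(1-p_\rho)\to\pi$, and $u_\rho(p_\rho+\sqrt\rho\,s)\to\tilde v_0(s)$ uniformly on compacts), the conjugated operator converges to the one-dimensional operator
\[ \mathcal L_0=-\partial_s^2-1+3(\tilde v_0^+)^2\quad\text{on }L^2((-\infty,\pi)),\ \text{Dirichlet at }s=\pi. \]
Because $\tilde v_0>0$ for $s<0$ and $\tilde v_0\leq0$ on $[0,\pi]$, the potential of $\mathcal L_0$ equals $3\tanh^2(s/\sqrt2)-1$ on $(-\infty,0)$ and $-1$ on $(0,\pi)$; it tends to $2$ as $s\to-\infty$, so $\sigma_{\mathrm{ess}}(\mathcal L_0)=[2,\infty)$, and $\inf\sigma(\mathcal L_0)<0$: the trial function $\psi_0=\tilde v_0\,\mathbf{1}_{[0,\pi]}$ satisfies $\langle\mathcal L_0\psi_0,\psi_0\rangle=\int_0^\pi\big((\tilde v_0')^2-\tilde v_0^2\big)=0$ since $\tilde v_0|_{(0,\pi)}=-\tfrac1{\sqrt2}\sin s$ is the first Dirichlet eigenfunction of $-\partial_s^2$ on $(0,\pi)$ with eigenvalue $1$, while for $\eta$ supported near $0$ with $\eta(0)<0$ the first variation $2\langle\mathcal L_0\psi_0,\eta\rangle=\sqrt2\,\eta(0)$ is negative, the whole of it coming from the jump $\psi_0'(0^+)=-\tfrac1{\sqrt2}$; hence $\langle\mathcal L_0(\psi_0+t\eta),\psi_0+t\eta\rangle<0$ for small $t>0$. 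After a harmless truncation near $s=\pi$ this yields a compactly supported $\chi_0\in H^1((-\infty,\pi))$ with $\langle\mathcal L_0\chi_0,\chi_0\rangle<0$. Transplanting, $\Phi_\rho(x)=\chi_0\big(\tfrac{|x|-p_\rho}{\sqrt\rho}\big)\vartheta_{k_0}(x/|x|)\in H^1_{0,G}(B)$ for $\rho$ small, and a direct computation in the rescaled variable (using $\int_{\Sp}\vartheta_{k_0}^2=1$, $\int_{\Sp}|\nabla_\theta\vartheta_{k_0}|^2=\gamma_{k_0}$, $p_\rho\to1$ and $u_\rho(p_\rho+\sqrt\rho\,s)\to\tilde v_0(s)$) gives $Q_D(\Phi_\rho)/\|\Phi_\rho\|_{L^2}^2\to\langle\mathcal L_0\chi_0,\chi_0\rangle/\|\chi_0\|_{L^2}^2<0$. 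Finally, $\Phi_\rho$ is $L^2$- and $Q_D$-orthogonal to the radial first eigenfunction $\varphi_1$ (all cross integrals carry the factor $\int_{\Sp}\vartheta_{k_0}=0$) and $Q_D(\varphi_1)=\bar\mu_1\|\varphi_1\|_{L^2}^2<0$, so $Q_D$ is negative definite on the $2$-dimensional space $\mathrm{span}\{\varphi_1,\Phi_\rho\}\subset H^1_{0,G}(B)$; by the min--max characterization of $\mu_2$ we conclude $\mu_2<0$ for all sufficiently small $\rho>0$.

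\textbf{Main obstacle.} The substance of the proof lies entirely in part (d): correctly identifying the limiting interface profile $\tilde v_0$ and the associated one-dimensional operator $\mathcal L_0$, proving that $\mathcal L_0$ has a negative eigenvalue — this is precisely where the Allen--Cahn character of $f$ on the positive side enters, producing the $\tanh$ layer on $(-\infty,0)$ glued to a half-period sine piece on $(0,\pi)$ that is exactly neutral for $-\partial_s^2-1$ and whose corner at $s=0$ supplies the negative first variation — and then transferring this spectral information back to $B$ by a controlled transplantation. Parts (a)--(c) are, by contrast, routine once the eigenvalue bounds of (a), the implicit function theorem, and condition~\ref{G} are in place.
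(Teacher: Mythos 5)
Your proposal is correct, and for parts (c) and (d)—the latter being the real substance of the result—it follows essentially the same path as the paper: rescale to the one-dimensional limit form on $(-\infty,\pi)$, recognize the half-sine as a Dirichlet-neutral test function, exploit the $C^1$ corner at $s=0$ to produce a strictly negative value, and transplant back by multiplying by the first $G$-symmetric spherical harmonic, with the error controlled through Proposition~\ref{radial}(d). Your final step for (d) (orthogonality of $\Phi_\rho$ to $\varphi_1$ and min--max on the two-dimensional span) merely spells out what the paper leaves implicit. Part (c) is also the same idea in slightly different clothes: you pass through convergence of the eigenvalues of $L$ to those of $-\bar\lambda_2^{-1}\Delta-1$ and Dirichlet--Neumann interlacing within a fixed spherical harmonic, whereas the paper bounds $Q_D$ below by $\int\rho|\nabla\phi|^2-\phi^2$ and appeals directly to $\sigma>\bar\lambda_2$.

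Part (a), however, is argued by a genuinely different route for $\bar\mu_2>0$. The paper forms the linear combination $\phi=\alpha\bar\varphi_1+\beta\bar\varphi_2$ vanishing at $p_\rho$, splits $\bar Q_D(\phi)$ over $B(p_\rho)$ and the annulus $B\setminus B(p_\rho)$, shows each piece is nonnegative, identifies the equality cases, and forces $\phi\equiv0$, a contradiction. You instead invoke Sturm oscillation to locate the unique interior zero $r_0$ of $\bar\varphi_2$, compare $r_0$ with $p_\rho$, and conclude via strict domain monotonicity of the first Dirichlet eigenvalue together with the two facts $\mu_1^{\mathrm{Dir}}(L,B(p_\rho))>0$ and $\mu_1^{\mathrm{Dir}}(L,B\setminus B(p_\rho))=0$—which are precisely the structural ingredients the paper uses to analyze its terms $(I)$ and $(II)$. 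Both proofs are clean and rest on the same two building blocks; yours avoids constructing the two-parameter combination and may read more naturally, while the paper's is perhaps closer in spirit to the later use of $Q_D$ on $E_0$. Likewise, your explicit first-variation computation using the derivative jump of $u_\rho^-$ at $p_\rho$ makes quantitative what the paper phrases simply as ``$u_\rho^-$ is not $C^1$.'' In short: same strategy for (b), (c), (d); a different, equally valid argument for the Morse-index claim in (a).
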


\begin{proof} In order to prove a), multiply \eqref{ball} by $u_{\rho}^-$ and integrate to obtain that:
$$\bar{Q}_D(u_\rho^-)=0.$$

If $\bar{Q}_D$ were semipositive definite, then $u_{\rho}^-$ would be a minimizer and hence a solution of the linearized equation. But $u_\rho^-$ is not $C^1$ regular, and this gives us a contradiction. Then the linearized operator $L$ has a negative eigenvalue $\bar{\mu}_1<0$.

Assume now that $\bar{\mu}_2 \leq 0$. Then, $\bar{Q}_D (\psi) \leq 0$ for any $\phi \in span\{ \bar{\varphi}_1, \bar{\varphi}_2\}$. Now, take $\alpha$, $\beta \in \R$ such that $\phi = \alpha \bar{\varphi}_1 + \beta \bar{\varphi}_2$ vanishes at the point $p_\rho$. We have that:

$$0\geq \bar{Q}_D (\psi)= \int_{B(p_\rho)} \rho |\nabla \phi|^2 - \phi^2 + 3 (u_{\rho}^+)^2 \phi^2 + \int_{B\setminus B(p_\rho)} \rho |\nabla \phi|^2 - \phi^2  = (I) + (II).$$

We now observe that in $B\setminus B(p_\rho)$, $u_\rho^-$ is a negative solution of the linear problem $-\Delta u + u =0$. As a consequence,
$$ (II) \geq 0, \mbox{ and } (II) =0 \Leftrightarrow \phi(r) = \alpha u_{\rho}(r)^- \mbox{ for } r\in (p_\rho,1), \ \alpha \in \R.$$

Regarding $(I)$, observe that $u_\rho|_{B_(p_\rho)}$ solves the equation $\tilde{L}(u)=0$, where $\tilde{L}= -\rho \Delta - 1 + (u_\rho^+)^2$. Since  $u_\rho|_{B(p_\rho)}>0$, we conclude that $0$ is the first eigenvalue of $\tilde{L}$ with Dirichlet boundary conditions. As a consequence, the operator $L= -\rho \Delta - 1 + 3(u_\rho^+)^2>\tilde{L}$ has only positive eigenvalues. Then,

$$ (I) \geq 0, \mbox{ and } (I) =0 \Leftrightarrow \phi = 0 \mbox{ for } r \in [0, p_\rho].$$

Since $\phi$ is a regular function it turns out that $\phi=0$, which implies that $\bar{\varphi}_1$ and $\bar{\varphi}_2$ are linearly dependent functions, a contradiction.

\medskip The proof of b) is an immediate consequence of a), Proposition \ref{radial} and the Implicit Function Theorem.

\medskip 
	To prove c) we can assume that the eigenfunction $\varphi_2$ associated to $\mu_2$ is not radially symmetric, since  otherwise we are done by a). By Fourier decomposition, we obtain that $\varphi_2(x)= f(r) \vartheta_k(\theta)$, where $f$ is a radial function with $f(1)=0$ and $\vartheta_k$ is a $G$-symmetric spherical harmonic. However, observe that  for any nontrivial function $\phi \in H^1_G(B)$ that is written as  $\phi_2(x)= f(r) \vartheta_k(\theta)$ (even if $f(1) \neq 0$), we have:
	
	\begin{equation*}  \mu_2 \int_{B} \varphi_2^2=  Q_D(\phi) = \int_{B} \rho |\nabla \phi|^2 - \phi^2 + 3 (u_{\rho}^+)^2 \phi^2 \geq \int_{B} \rho |\nabla \phi|^2 - \phi^2 .\end{equation*}

	But by our assumption \ref{G} on the symmetry group G,	
	$$ \int_{B} \bar{\l}_2^{-1} |\nabla \phi|^2 - \phi^2 >0.$$
	
	And this implies that $\mu_2>0$ if $\rho$ is sufficiently close to $\bar{\l}_2^{-1}$.
\medskip 
		
	In order to prove d) we shall make use of the change of variable \eqref{change} and consider instead the form $Q'_D:H_{0,G}^1(B(R)) \to \R$,
	
	$$Q'_D(\psi)= \int_{B(R)} |\nabla \psi|^2 - \psi^2 + 3 (v_{R}^+)^2 \psi^2.$$
	
	We consider also the limit form $\hat{Q}_D: H_0^1(-\infty, \pi)\to \R$,
	$$   \hat{Q}_D(\xi) = \int_{-\infty}^{\pi}  |\xi'(r)|^2 - \xi^2 + 3 (\tilde{v}_{0}^+)^2 \xi^2,$$
	where $\tilde{v}_0$ is given in \eqref{v0}. We take:
	
	$$ \xi_0(r)= \left \{ \begin{array}{ll} \sin(r) & r \in [0, \pi], \\ 0 & r <0. \end{array} \right.$$

	Then, $\hat{Q}_D(\xi_0)=0$. If $\hat{Q}_D $ is semipositive definite, then $\xi_0$ would be a first eigenfunction, but $\xi_0$ is not $C^1$ and this gives a contradiction. Then, $\hat{Q}_D $ attains negative values; by density, there exists $\xi \in C_0^{\infty}(-\infty, \pi)$ such that $\hat{Q}_D(\xi) <0$.
	
	Take now $\vartheta_1: \mathbb{S}^{N-1} \to \R$ the first G-symmetric eigenfunction of the Laplace-Beltrami operator on $\mathbb{S}^{N-1}$. We use spherical coordinates $\R^N \ni x = (\rho, \theta) \in (0, +\infty) \times \mathbb{S}^{N-1}$, and define the test function:
	$$ \psi(x) = \xi(r-p_R) \vartheta_k(\theta).$$
	Observe now:
	
	$$ | \nabla \psi |^2 = \xi'(r-p_R)^2\, \vt^2(\theta) + \frac{1}{r^2} \, \xi(r-p_R)^2\,   |\nabla_{\theta} \vartheta(\theta)|^2. $$

	We now estimate $Q_D'(\psi)$ for large $R$, taking into account Proposition \ref{radial}, d).

\begin{eqnarray*}Q_D'(\psi)= \int_{-\infty}^{\pi} \left [ (r + p_R )^{N-1} \big ( \xi'(r)^2 - \xi(r)^2 + 3 (v_R (r + p_R)^+)^2 \xi(r)^2 \big ) + \right.\\ \left.  (r + p_R)^{N-3} \gamma_k \xi(r)^2 \right ] \, dr   = p_R^{N-1}  \hat{Q}_D(\xi) + o(p_R^{N-1}) <0.\end{eqnarray*}
 
This implies the assertion d) and the proof is done.

\end{proof}

Proposition \ref{linear1} motivates the definition:

\begin{equation} \label{rho0} \rho_0 = \sup \{ \rho \in (0, \bar{\lambda}_2^{-1}): \ \mu_2(\rho) \leq 0\} \in (0, \bar{\lambda}_2^{-1}).\end{equation}

\begin{remark} \label{remark} Observe that for any $\rho \in (\rho_0, \bar{\lambda}_2^{-1})$, $\mu_2>0$ and then the linearized operator $L$ defined in \eqref{L}, acting on $H_{0,G}^1(B)$, is nondegenerate. Furthermore, the quadratic form $Q_D$ is positive definite in the space $E_0$. 
	
Finally, a continuity argument shows that $\mu_2=0$ for $\rho = \rho_0$. These facts will be essential in the rest of the paper.
\end{remark}
We finish this section with a result regarding the linearized operator with boundary data. For that, recall the definition of the space $E$ given in \eqref{defE}.

\begin{proposition} \label{psiv} Assume that $\rho \in (\rho_0, \bar{\lambda}_2^{-1})$. Then, the followings assertions hold true:
	
\begin{enumerate}
	\item[a)] For any given function $w \in H^{1/2}_G(\Sp)$ there exists a unique $\psi_{\rho,w}=\psi_w \in H^1_G(B)$ solving (in a weak sense):
\begin{equation} \label{psi}
		\begin{cases}
			- \rho \Delta \psi_w - \psi_w + 3 (u_\rho^+)^2 \psi_w=0, & \mbox{ in }B, \\
			\psi_w=w &\mbox{on }\partial B. \end{cases}
	\end{equation}
	
	\item[b)] If $w \in H^{1/2}_G(\Sp)$ and $\int_{\Sp} w =0$, then $\psi_w \in E$. Moreover, if $w \in H^{s}_G(\Sp)$ for some $s>1$,
	\begin{equation*}  \int_{\Sp} \partial_{\nu} \psi_w =0. \end{equation*}  
	
	\item[c)] If $w \in C^{2,\alpha}_G(\Sp)$ then $\psi_w \in C^{2,\alpha}_G(B)$.

	\end{enumerate}
	
\end{proposition}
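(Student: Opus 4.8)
\textbf{Proof proposal for Proposition \ref{psiv}.}

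The plan is to solve the linear boundary-value problem \eqref{psi} by reducing to a homogeneous problem via a harmonic (or smooth) extension of the boundary datum, and then invoke the nondegeneracy of $L$ guaranteed by Remark \ref{remark}. For part a), first I would fix an extension $\tilde{w} \in H^1_G(B)$ of $w$ (for instance the harmonic extension, which is $G$-symmetric since $w$ is), and look for $\psi_w = \tilde{w} + \phi$ with $\phi \in H^1_{0,G}(B)$. Then $\phi$ must satisfy $L\phi = -L\tilde{w}$ in the weak sense in $H^1_{0,G}(B)$, where $L = -\rho\Delta - 1 + 3(u_\rho^+)^2$. Since $\rho \in (\rho_0, \bar\lambda_2^{-1})$, Remark \ref{remark} tells us $\mu_2 > 0$, so $L$ is nondegenerate on $H^1_{0,G}(B)$ and $0$ is not in its spectrum; hence $L$ is an isomorphism from $H^1_{0,G}(B)$ onto its dual (equivalently, the bounded self-adjoint operator associated to $Q_D$ on $H^1_{0,G}(B)$ is invertible by the spectral theorem and Fredholm alternative, since $L$ is a compact perturbation of the positive isomorphism $-\rho\Delta$). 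This yields existence and uniqueness of $\phi$, hence of $\psi_w$. Uniqueness of $\psi_w$ itself follows because the difference of two solutions lies in $H^1_{0,G}(B)$ and is killed by the injective operator $L$.

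For part b), suppose $\int_{\Sp} w = 0$. By Lemma \ref{E}, characterization iii), it suffices to show $\int_{\Sr}\psi_w = 0$ for every $r \in (0,1]$. The clean way is to project: write $\psi_w = f_0(r) + \tilde\psi$ where $f_0(r) = \frac{1}{\omega_N}\int_{\Sr}\psi_w$ is the radial average and $\tilde\psi \in E$ (this decomposition is $L^2$-orthogonal on each sphere and respects $H^1$). Averaging the PDE over $\Sr$ and using that $u_\rho$ is radial, one finds that $f_0$ solves the radial ODE $-\rho(f_0'' + \frac{N-1}{r}f_0') - f_0 + 3(u_\rho^+)^2 f_0 = 0$ on $(0,1)$ with $f_0(1) = \frac{1}{\omega_N}\int_{\Sp} w = 0$ and $f_0$ bounded (regular) at $0$; that is, $f_0$ is a radial element of $H^1_{0,r}(B)$ with $L f_0 = 0$. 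But by Proposition \ref{linear1} a), $u_\rho$ is nondegenerate in $H^1_{0,r}(B)$ (indeed $\bar\mu_1 < 0 < \bar\mu_2$, so $0$ is not a radial eigenvalue), forcing $f_0 \equiv 0$. Hence $\psi_w = \tilde\psi \in E$. For the second assertion, when $w \in H^s_G(\Sp)$ with $s>1$ elliptic regularity (part c) below in the smooth case, or $H^2$ regularity up to the boundary in the Sobolev scale) makes $\partial_\nu \psi_w$ well defined in $H^{s-3/2}(\Sp)$; then $\int_{\Sp}\partial_\nu\psi_w = \int_{\partial B}\partial_\nu\psi_w = \int_B \Delta\psi_w = \frac{1}{\rho}\int_B (-\psi_w + 3(u_\rho^+)^2\psi_w)$, and since $\psi_w \in E$ and the potential $-1 + 3(u_\rho^+)^2$ is radial, the integrand is $L^2$-orthogonal to radial functions (in particular to the constant $1$), so the integral vanishes. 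Equivalently, one uses that $\int_{\Sr}\psi_w = 0$ for all $r$ from the first part and integrates in $r$.

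Part c) is a standard Schauder-regularity statement: the coefficient $3(u_\rho^+)^2$ is $C^{2,\alpha}$ since $u_\rho \in C^{4,\alpha}$ and $t \mapsto (t^+)^2$ is $C^1$ with Lipschitz derivative (in fact $(u_\rho^+)^2 = u_\rho^2$ near the positivity region and $\equiv 0$ near the negativity region, and it is $C^{1,1}$ across $\partial B(p_\rho)$, which is more than enough), and $w \in C^{2,\alpha}_G(\Sp)$. Interior and boundary Schauder estimates for the uniformly elliptic operator $-\rho\Delta - 1 + 3(u_\rho^+)^2$ with $C^{2,\alpha}$ Dirichlet data give $\psi_w \in C^{2,\alpha}_G(B)$; $G$-symmetry is preserved because the operator commutes with the $G$-action and the solution is unique. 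The main obstacle, such as it is, is bookkeeping: one must be a little careful that the radial-average function $f_0$ genuinely lands in $H^1_{0,r}(B)$ (regularity at the origin and the vanishing trace at $r=1$) so that Proposition \ref{linear1} a) applies, and that the low regularity of $(u_\rho^+)^2$ near $\partial B(p_\rho)$ — it is only $C^{1,1}$ there — does not spoil the $C^{2,\alpha}$ conclusion; since $C^{1,1} \subset C^{0,\alpha}$ for every $\alpha \in (0,1)$, Schauder theory still delivers $\psi_w \in C^{2,\alpha}$, so this is not a real difficulty. No deep estimates are needed beyond the nondegeneracy already established in Section 3.
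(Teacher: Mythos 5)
Your proof is correct and follows essentially the same route as the paper: reduce part a) to the invertibility of $L$ on $H^1_{0,G}(B)$ via an extension of the boundary datum, prove $\psi_w\in E$ by showing the radial average $f_0$ is a radial Dirichlet kernel element of $L$ and hence vanishes by Proposition~\ref{linear1}~a), and finish c) by Schauder. The only difference is in the zero-mean claim for $\partial_\nu\psi_w$: the paper computes $\partial_\nu\psi_w=\sum_k f_k'(1)\vartheta_k$ from the Fourier expansion (using $f_k\in H^{s+1/2}(1/2,1)\subset C^1$ near $r=1$ for $s>1$), whereas you apply the divergence theorem and the radiality of the potential; both arguments are valid, and yours is arguably a touch cleaner since $\Delta\psi_w=\rho^{-1}(-1+3(u_\rho^+)^2)\psi_w\in L^2(B)$ automatically, so the normal trace and the flux identity are well defined even without the $s>1$ hypothesis.
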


\begin{proof}
	For the proof of a), take any function $\phi \in H^1_G(B)$ such that $\phi|_{\Sp} = w$, and define
	$$ \xi= 	- \rho \Delta \phi - \phi + 3 (u_\rho^+)^2 \phi \in (H_{0,G}^1(B))^{-1}.$$
	Since the operator $L$ defined in \ref{L} is an isomorphism, we can solve uniquely the problem:
	\begin{equation*} 
		\begin{cases}
			- \rho \Delta \varphi - \varphi + 3 (u_\rho^+)^2 \varphi=\xi, & \mbox{ in }B, \\
			\varphi=0 &\mbox{on }\partial B. \end{cases}
	\end{equation*}
Then $\psi= \phi - \varphi$ is the unique solution of \eqref{psi}.

We now show b). We use the Fourier expansion of $\psi_w$ as:
	
	$$\psi_w(x) = \frac{1}{\sqrt{\o_N}} \, f_0(r)  + \sum_{k=1}^{+\infty} f_k(r) \vartheta_k(\theta), \ \ \ \o_N =|\Sp|.$$
	
	Observe that $f_0$ can be recovered as:
	
	$$ f_0(r)= \frac{1}{\sqrt{\o_N} r^{N-1}} \int_{\mathbb{S}^{N-1}(r)} \psi_w = \sqrt{\o_N} \fint_{\mathbb{S}^{N-1}(r)} \psi_w.$$
	It is well known that $f_0(x)= f_0(|x|) \in H_{0,r}^1(B)$ and solves weakly the equation:
	
		\begin{equation*} 
		\begin{cases}
		- \rho \Delta f_0 - f_0 + 3 (u_\rho^+)^2 f_0=0, & \mbox{ in }B, \\
			f_0=0 &\mbox{on }\partial B. \end{cases}
	\end{equation*}

By Proposition \ref{linear1}, a), we obtain that $f_0=0$. And this implies that $\psi_w \in E$ (recall Lemma \ref{E}).
	
	\medskip 
	
	Moreover, if $w \in H^{s}(\Sp)$, then $\psi_w \in H^{s+ 1/2}(\Omega)$, and hence $\partial_\nu \psi_w$ belongs to $H^{s-1/2}(\Sp)$. Moreover, the functions $f_k$ belong to $H^{s+1/2}(1/2,1)$, and in particular they are $C^1$ close to $r=1$. And then we can compute:

$$ \partial_{\nu} \psi_w(\theta) = 	\sum_{k=1}^{+\infty} f_k'(1) \vartheta_k(\theta),$$

which has indeed $0$ mean.

\medskip The proof of c) follows from classical Schauder theory.

\end{proof}

\section{The Dirichlet-to-Neumann operator}

In this section we build the Dirichlet-to-Neumann operator to which we intend to apply a local bifurcation argument. In order to do this, some definitions are in order. First, given $w \in  C_{G}^{2,\alpha}(\mathbb{S}^{N-1})$, $w>0$, we define the radial graph

$$B(w)= \{ x \in \R^N\setminus\{0\}: |x| < w\Big(\frac{x}{|x|} \Big)\} \cup \{0\}.$$
In spherical coordinates $(r,\theta)$,
$$ B(w)= \{ (r, \theta) \in [0, +\infty) \times \Sp:\   \ r < w(\theta)\}.$$

\medskip The main result of this section is the following:

\begin{proposition} \label{F}
	For any  $\rho \in (\rho_{0}, \ \bar{\lambda}_{2}^{-1})$, there exists a neighborhood $\mathcal{O}$ of $0$ in $ C_{G}^{2,\alpha}(\mathbb{S}^{N-1})$ such that for any $w \in \mathcal{O}$, the problem
	\begin{equation} \label{eqD}
	\begin{cases}
			- \rho \, \Delta u = u - (u^+)^3 & \mbox{ in }B(1+w), \\
			u=0 &\mbox{on }\partial B(1+w). \end{cases}
	\end{equation}
	has a unique solution $u=u_{\rho, w}$ in a neighborhood of $u_\rho $ in $ C_G^{2,\alpha}(B(1+w))$. Moreover the dependence of $u$ on the function $w$ is $C^1$ and $u_{\rho, 0}=u_{\rho}$.
\end{proposition}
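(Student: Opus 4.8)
The plan is to recast \eqref{eqD} as an equation on the fixed domain $B$ by a diffeomorphism, and then apply the Implicit Function Theorem using the nondegeneracy of $L$ established in Remark \ref{remark}. First I would fix $w$ small in $C^{2,\alpha}_G(\Sp)$ and construct a $G$-equivariant diffeomorphism $\Phi_w : \overline{B} \to \overline{B(1+w)}$ which is the identity near $0$ and equals $x \mapsto (1+w(x/|x|))\,x$ near $\partial B$; one checks $\Phi_w$ depends smoothly (indeed $C^1$, even analytically) on $w$, with $\Phi_0 = \mathrm{id}$. Pulling back, a function $u$ solves \eqref{eqD} on $B(1+w)$ if and only if $\hat u = u \circ \Phi_w$ solves a problem of the form
$$
\begin{cases}
- \rho \, \mathcal{A}_w \hat u = \hat u - (\hat u^+)^3 & \text{in } B,\\
\hat u = 0 & \text{on } \partial B,
\end{cases}
$$
where $\mathcal{A}_w$ is a second-order elliptic operator with $C^{0,\alpha}_G$ coefficients depending smoothly (in the $C^1$ sense) on $w$, and $\mathcal{A}_0 = \Delta$. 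Here the pulled-back operator preserves $G$-symmetry because $\Phi_w$ is $G$-equivariant, so the whole problem stays in the $G$-symmetric category.

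Next I would define the map
$$
\mathcal{G} : \mathcal{U} \times \mathcal{V} \longrightarrow C^{0,\alpha}_G(B), \qquad
\mathcal{G}(w, \hat u) = -\rho\, \mathcal{A}_w \hat u - \hat u + (\hat u^+)^3,
$$
where $\mathcal{U}$ is a neighborhood of $0$ in $C^{2,\alpha}_G(\Sp)$ and $\mathcal{V}$ a neighborhood of $u_\rho$ in $C^{2,\alpha}_{G,0}(B)$. By Proposition \ref{radial} and the definition of $\mathcal{A}_w$ we have $\mathcal{G}(0, u_\rho) = 0$. The map $\mathcal{G}$ is $C^1$ in both variables: the dependence on $w$ is $C^1$ since the coefficients of $\mathcal{A}_w$ are, and the nonlinearity $t \mapsto (t^+)^3$ is $C^1$ (with derivative $3(t^+)^2$, which is $C^1$ — note this uses that the exponent $3>2$, so that $(t^+)^3$ is genuinely $C^2$ and the Nemytskii operator is differentiable on Hölder spaces). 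The partial differential $D_{\hat u}\mathcal{G}(0, u_\rho) : C^{2,\alpha}_{G,0}(B) \to C^{0,\alpha}_G(B)$ is precisely $-\rho\Delta - 1 + 3(u_\rho^+)^2 = L$ as in \eqref{L}. By Remark \ref{remark}, since $\rho \in (\rho_0, \bar\lambda_2^{-1})$ we have $\mu_2 > 0$, so $L$ has no kernel on $H^1_{0,G}(B)$; combined with elliptic Schauder regularity and the Fredholm alternative, $L : C^{2,\alpha}_{G,0}(B) \to C^{0,\alpha}_G(B)$ is an isomorphism.

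The Implicit Function Theorem then yields a neighborhood $\mathcal{O} \subset \mathcal{U}$ of $0$ and a $C^1$ map $w \mapsto \hat u_{\rho,w} \in C^{2,\alpha}_{G,0}(B)$ with $\mathcal{G}(w, \hat u_{\rho,w}) = 0$ and $\hat u_{\rho,0} = u_\rho$, unique in a neighborhood of $u_\rho$; transporting back via $\Phi_w$ gives $u_{\rho,w} = \hat u_{\rho,w} \circ \Phi_w^{-1}$, which solves \eqref{eqD}, depends $C^1$ on $w$, lies in $C^{2,\alpha}_G(B(1+w))$ near $u_\rho$, is unique there, and satisfies $u_{\rho,0} = u_\rho$. (One should also remark that $\Phi_w^{-1}$ depends $C^1$ on $w$, so composing preserves the $C^1$ dependence; this is routine.) The main obstacle is purely bookkeeping rather than conceptual: one must verify carefully that the diffeomorphism $\Phi_w$ can be chosen $G$-equivariant and depending smoothly on $w$ while keeping the pulled-back coefficients in $C^{0,\alpha}_G$, and that the Nemytskii operator associated to $(t^+)^3$ is $C^1$ between the relevant Hölder spaces — the latter is where the precise form of the nonlinearity (being $C^2$ but not $C^3$) matters, and is the reason the argument is carried out in $C^{2,\alpha}$ rather than in a smoother setting.
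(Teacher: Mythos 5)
Your proposal is correct and mirrors the paper's proof: both fix the domain to $B$ via the cut-off radial diffeomorphism, recast the problem as an equation with $w$-dependent coefficients (the paper pulls back the Euclidean metric to get $\Delta_{g_w}$, you pull back the operator directly as $\mathcal{A}_w$ — an equivalent formulation), and apply the Implicit Function Theorem using that $L=-\rho\Delta-1+3(u_\rho^+)^2$ is an isomorphism on the $G$-symmetric Hölder spaces for $\rho\in(\rho_0,\bar\lambda_2^{-1})$. The extra remarks you make about the $C^1$ regularity of the Nemytskii operator associated to $(t^+)^3$ are a useful clarification that the paper leaves implicit.
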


\begin{proof}
	Let $w\in C_{G}^{2,\alpha}(\mathbb{S}^{N-1})$. It will be more convenient to consider the fixed domain $B$ endowed with a new metric depending on $w$. On that purpose, we consider the parameterization of $B(1+w)$ defined by $\Pi: B\rightarrow  B(1+w),$
	\begin{equation*}
		\Pi(r,\theta):=\left(\left(1+\chi(r)w(\theta)\right)r,\theta\right),
	\end{equation*}
	where $\chi$ is a $C^{\infty}$ cut-off function such that:
	\begin{equation} \label{cutoff}
		\chi(r)=
		\begin{cases}
			0, &r \leq\frac{1}{3},\\
			1, &r \geq\frac{2}{3}.		\end{cases}
	\end{equation}	
Denoting by $g$ the euclidean metric in $B$ and defining:
$$ g_w = \Pi^* g, \ \hat{u}=\Pi^{*}u,$$
we can write the problem (\ref{eqD}) as:
	\begin{equation} \label{eqDv}
		\begin{cases}
			-\rho\Delta_{g_{w}} \hat{u}= \hat{u}-(\hat{u}^+)^{3}&\mbox{in $B$ },\\
			\hat{u}=0 &\mbox{on $\partial B$}.
		\end{cases}
	\end{equation}

We now define $\mathcal{B} = \{ w \in C_{G}^{2,\alpha}(\mathbb{S}^{N-1}): \| w \|_{C^{2, \alpha}} <1/2 \}$ and:
$$N: C^{2,\alpha}_{G,0}(B) \times \mathcal{B} \to C^{0, \alpha}_G(B), $$
\begin{equation*} N(\psi,w) =-\rho \Delta_{g_w}\psi -\psi +(\psi^{+})^{3}.\end{equation*}
The proof reduces then to show that the equation $N(\psi,w)$ is solvable for any $w$ in a neighborhood of $0$. Observe now that if $w\equiv0$, then $g_{0}=g$ is the euclidean metric on $B$, and $u_{\rho}$ is a solution of (\ref{eqDv}).  In other words,

\[N(u_\rho,0)=0.\]

The mapping $N$ is $C^1$ from a neighborhood of $(u_\rho,0)$ and the partial differential of $N$ with respect to $\psi$ at $(u_\rho,0)$ is
	\[D_{\psi}N(u_\rho,0)=-\rho \Delta \psi -\psi +3(u_\rho^{+})^{2}\psi.\]

By the definition of $\rho_0$ (see \eqref{rho0}), the operator $L$ defined in \eqref{L} is an isomorphism. By regularity arguments, also $D_{\psi}N(u_\rho,0)$ is an isomorphism  between $C^{2,\alpha}_{G,0}(B)$ and $C^{0,\alpha}_{G}(B)$, and Proposition \ref{F} follows from the Implicit Function Theorem.
	
\end{proof}

\begin{definition} \label{defF} By the above result, we have the existence of a neighborhood $\mathcal{U}$ of the set $\{(\rho, u_\rho), \ \rho \in (\rho_{0},\bar{\lambda}_{2}^{-1})\}$ in the space $(\rho_{0},\bar{\lambda}_{2}^{-1}) \times C^{2,\alpha}_G(\Sp) $ such that $\eqref{eqD}$ is solvable for any $(\rho, w) \in \mathcal{U}$. After the canonical identification of $\partial B(1+w)$ with $\mathbb{S}^{N-1},$ we can define: $F: \mathcal{U} \rightarrow C^{1,\alpha}_{G}(\mathbb{S}^{N-1})$,
\begin{equation}\label{eqF}
	F(\rho,w)= \partial_{\nu} (u_{\rho, w}) \Big|_{\partial B(1+w)}-\frac{1}{|\partial B(1+w)|}\int_{\partial B(1+w)}\partial_{\nu} (u_{\rho,w}).
\end{equation}

\end{definition}

 Let us point out that $F(\rho, w)=0$ if and only if $\partial_{\nu} (u_{\rho, w})$ is constant on the boundary $\partial B(1+w)$. Obviously, $F(\rho, 0)=0$ for all $\rho \in (\rho_{0},\bar{\lambda}_{2}^{-1})$. Then Theorem \ref{main} follows if we find a branch of nontrivial solutions $(\rho, w)$ to the equation $F(\rho, w)=0$ bifurcating from some point $(\tilde{\rho}, 0)$. For this aim, we will use a local bifurcation argument. This leads us to the study of the linearization of $F$ around a point $(\rho, 0)$.

\medskip

For $\rho\in (\rho_{0},\bar{\lambda}_{2}^{-1})$, we can define the linear continuous operator $H_{\rho}:C^{2,\alpha}_{G}(\mathbb{S}^{N-1})\rightarrow C_{G}^{1,\alpha}(\mathbb{S}^{N-1})$ by
\begin{equation}\label{defH}
	H_{\rho}(w) =  \partial_{\nu} (\psi_{\rho,w}) + (N-1) \, w,
\end{equation}
where $\psi_{\rho, w}$ is given by Proposition \ref{psiv}. Let us point out that if $w$ has zero mean, then $\psi_{\rho, w}$ belongs to $E$ and hence $H_\rho(w)$ also has zero mean.

\medskip

In next proposition we prove that the linearization of the operator $F$ with respect to $v$ at $w=0$ is given by $H_{\rho}$, up to a multiplicative constant.

\begin{proposition} \label{H}
	For any $\rho\in ( \rho_{0}, \, \bar{\lambda}_{2}^{-1})$ we have that for any $w \in C^{2,\alpha}_{G,m}(\Sp)$,
	\[
	D_{w} F(\rho, 0) =- c_\rho \,  H_{\rho}(w) .
	\] 
where $c_\rho$ is given in \eqref{constant}.

\end{proposition}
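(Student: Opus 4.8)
The plan is to compute the Fréchet derivative of $F$ with respect to $w$ at $w=0$ directly from its definition \eqref{eqF}, by differentiating the solution map $w \mapsto u_{\rho,w}$ together with the geometric objects (the boundary $\partial B(1+w)$, the normal vector $\nu$, and the averaging operator). A convenient way to organize this is to work on the fixed ball $B$ with the pulled-back metric $g_w = \Pi^* g$, as in the proof of Proposition \ref{F}, so that $\hat u_{\rho,w} = \Pi^* u_{\rho,w}$ solves \eqref{eqDv} and depends $C^1$ on $w$. Writing $\dot u = D_w \hat u_{\rho,w}(0)[w]$, differentiating \eqref{eqDv} at $w=0$ (where $g_0=g$) gives that $\dot u$ solves a linear Dirichlet problem $-\rho \Delta \dot u - \dot u + 3(u_\rho^+)^2 \dot u = 0$ in $B$, but with a nonzero boundary value coming from the fact that, in the original coordinates, $u_{\rho,w}=0$ on $\partial B(1+w)$ rather than on $\partial B$; expanding $u_\rho((1+w)\theta)=0$ to first order yields $\dot u|_{\partial B} = -u_\rho'(1)\, w = -c_\rho w$. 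By the uniqueness statement in Proposition \ref{psiv}(a), this forces $\dot u = -c_\rho \psi_{\rho,w}$.

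Next I would assemble the three contributions to $D_w F(\rho,0)[w]$. Since $F(\rho,0)=0$, i.e.\ $\partial_\nu u_\rho$ is the constant $c_\rho$ on $\partial B$, the variation of the normalizing average term contributes only through the constant part and will be absorbed into the ``minus its mean'' structure; concretely, differentiating \eqref{eqF} produces $\partial_\nu(\dot u)$ on $\partial B$ plus a term from the variation of the unit normal and of the boundary chart identification applied to the \emph{unperturbed} solution $u_\rho$, minus the corresponding averages. The term coming from moving the boundary radially by $w$ and from the change of the normal direction, evaluated on the radial function $u_\rho$ whose gradient on $\partial B$ is $c_\rho \nu$, is where the factor $(N-1)w$ appears: the relevant computation is the first variation of the normal derivative of a fixed function under a normal perturbation of the domain, which involves the mean curvature of $\partial B$ (equal to $N-1$) and the second normal derivative of $u_\rho$; using the equation \eqref{ball} to rewrite $u_\rho''(1)$ in terms of $u_\rho'(1)=c_\rho$ and $u_\rho(1)=0$ gives exactly $-c_\rho(N-1)w$ after accounting for signs. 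Collecting the pieces and recalling the definition \eqref{defH} of $H_\rho$, one obtains $D_w F(\rho,0)[w] = \partial_\nu(\dot u) - c_\rho(N-1)w$ modulo a radial (mean) term, and since $w$ has zero mean and $\psi_{\rho,w}\in E$ by Proposition \ref{psiv}(b) so that $\int_{\Sp}\partial_\nu \psi_{\rho,w}=0$, the mean corrections vanish and we get $D_w F(\rho,0)[w] = -c_\rho\big(\partial_\nu \psi_{\rho,w} + (N-1)w\big) = -c_\rho H_\rho(w)$.

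The main obstacle I expect is the bookkeeping of the geometric first-variation terms: carefully computing the derivative at $w=0$ of $\nu_w$ (the outward unit normal of $\partial B(1+w)$), of the induced surface measure, and of the canonical identification $\partial B(1+w)\cong \Sp$, and verifying that their action on the background radial solution $u_\rho$ produces precisely the curvature term $(N-1)w$ with the correct sign. This is a standard Hadamard-type domain-variation computation, but it is easy to lose track of signs and of which contributions are genuinely there versus which are pure ``mean'' terms killed by the projection in \eqref{eqF}; one clean route is to use the identity $\partial_\nu u|_{\partial\Omega_t}$ for a normal perturbation $\Omega_t$ of $\Omega$ and the formula for its $t$-derivative in terms of $\partial_\nu^2 u$ and the mean curvature, then specialize. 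A secondary technical point is justifying the $C^1$ dependence and the differentiation under the Dirichlet-to-Neumann trace, but this follows from Proposition \ref{F} and elliptic regularity (Proposition \ref{psiv}(c)), so it is routine. Once the boundary-perturbation lemma is in hand, the proof is a short assembly of the three terms.
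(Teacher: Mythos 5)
Your overall route matches the paper's: linearize the solution map, identify the linearization with $-c_\rho\psi_{\rho,w}$, then account for the geometric first variation of the normal derivative and use the equation at $r=1$ (where $u_\rho(1)=0$, $f(0)=0$) to produce $u_\rho''(1)=-(N-1)c_\rho$, hence the $(N-1)w$ term; the mean corrections are killed because $w$ has zero mean and $\int_{\Sp}\partial_\nu\psi_{\rho,w}=0$ by Proposition \ref{psiv}. But there is a real slip in the first step. If $\dot u:=\partial_s\big(\Pi_s^*u_{\rho,sw}\big)|_{s=0}$ is the derivative of the \emph{pulled-back} solution $\hat u_s$, then $\dot u|_{\partial B}=0$ (every $\hat u_s$ vanishes on $\partial B$), and differentiating \eqref{eqDv} in $s$ produces an extra term $-\rho\,\big(\partial_s\Delta_{g_s}\big)|_{s=0}u_\rho$ from the metric variation; it is \emph{not} the clean linear Dirichlet problem you wrote down. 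Also, the identity you expand should be $u_{\rho,sw}\big((1+sw)\theta\big)=0$, not $u_\rho\big((1+w)\theta\big)=0$; the unperturbed $u_\rho$ does not vanish there.

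The object that does solve the clean problem $L\dot\psi=0$ in $B$ with boundary data $-c_\rho w$ — and is therefore $-c_\rho\psi_{\rho,w}$ by Proposition \ref{psiv}(a) — is what the paper obtains by first subtracting the pulled-back background $\breve u_s=\Pi_s^* u_\rho$ and then differentiating $\psi_s:=\hat u_s-\breve u_s$. In that decomposition the metric-variation contributions cancel (both $\hat u_s$ and $\breve u_s$ solve the nonlinear equation in $g_s$), and the boundary mismatch appears cleanly as $\psi_s|_{\partial B}=-u_\rho(1+sw)$, giving $\dot\psi|_{\partial B}=-c_\rho w$. Equivalently, one can speak of the shape derivative rather than the material derivative $\dot u$ — your subsequent Hadamard-type assembly is then exactly the paper's computation — but as written your $\dot u$ is the wrong object and the PDE you assign to it is incorrect. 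Once this is repaired, the remainder of your outline is a faithful description of the paper's proof.
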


\begin{proof} 	By the $C^1$ regularity of $F$, it is enough to compute the directional derivative of $F$ with respect to $w$, that is,
\begin{align*}
		D_{w} F(\rho, 0)=\mathop {\lim}\limits_{s\rightarrow 0}\frac{F(\rho, sw)-F(\rho, 0)}{s}= \mathop {\lim}\limits_{s\rightarrow 0}\frac{F(\rho, sw)}{s}.
\end{align*}
	
As in the proof of Proposition \ref{F}, we make use of the parameterization $\Pi_s$ defined:
	 
\begin{equation*}
	\Pi_s(r,\theta):=\left(\left(1+\chi(r) s w(\theta)\right)r,\theta\right),
\end{equation*}
where $\chi$ is a $C^{\infty}$ cut-off function satisfying \eqref{cutoff}. Then we define:
	$$ g_s = \Pi_s^* \, g, \ \hat{u}_s=\Pi_s^{*} (u_{\rho, sw}),$$
	where $u_{\rho, sw}$ is the function given in Proposition \ref{F}. Observe now that the solution $u_\rho$ of \eqref{ball} defined in Proposition \ref{radial} can be extended to $B(1+\delta)$ for some $\delta >0$. Then, we can consider $u_\rho$ defined in $B(1+sw)$ for sufficiently small $s$ and define:
	$$ \breve{u}_s= \Pi_s^{*}u_{\rho}.$$
	
	Observe that both functions $\hat{u}$ and $\breve{u}$ depend on $s$ in a $C^1$ sense and coincide when $s=0$. They also depend on $\rho$, but in this proof we consider $\rho$ fixed. Moreover, both functions solve the equation
	
	\begin{equation*}
		-\rho \Delta_{g_{s}} z - z + (z^+)^3=0 \quad \mbox{ in } B.
	\end{equation*}
On the boundary $\partial B$ the functions $\hat{u}_s$ and $\breve{u}_s$ behave differently:
	
$$ \hat{u}_s(\theta)=0, \ \breve{u}_s(\theta)= u_{\rho}(1+sw(\theta)), \ \theta \in \Sp.$$ 
	
Let $\hat{u}_s= \psi_s + \breve{u}_s$; then $\psi_s$ satisfies:
	\begin{equation}\label{psipunto}
	\begin{cases}
		-\rho \Delta_{g_{s}} \psi_s - \psi_s + [(\psi_s + \breve{u}_s)^+]^3 - [(\breve{u}_s)^+]^3  =0  &\mbox{in } B,\\
		\psi=-u_{\rho}(1+sw(\theta)) &\mbox{on } \partial B.
	\end{cases}
\end{equation}
 Clearly the function $\psi_s$ is differentiable with respecto to $s$ and it vanishes for $s=0$. We set
 \[\dot{\psi}=\partial_{s} (\psi_s)|_{s=0}.\]
 Differentiating (\ref{psipunto}) with respect to $s$ at $s=0$, we get that
	\begin{equation*}
	\begin{cases}
		-\rho \Delta_{g} \dot{\psi} - \psi + 3 (u_\rho^+)^2 \dot{\psi}  =0  &\mbox{in } B,\\
		\dot{\psi}=- \partial_r u_\rho(1) w  &\mbox{on } \partial B.
	\end{cases} \end{equation*}
Then $\dot{\psi} = - c_\rho \, \psi$ where $\psi= \psi_{\rho, w}$ is as given in Proposition \ref{psiv}.  

Then, we can write
\[\hat{u}_s(r,\theta)=\breve{u}_s(r,\theta) - s c_\rho \, \psi(r,\theta)+ O(s^{2}).\] 

In a neighborhood of $\partial B$ the cut-off function $\chi$ is constantly equal to 1, and then we have:
\begin{align*}
	\hat{u}_s(r,\theta) & =u_\rho((1+s w(\theta))r) - s c_\rho \, \psi(r,\theta)+ O(s^{2})\\
	&=u_{\rho}(r)+s\big[ r w(\theta)\partial_{r}u_{\rho}(r) -  c_\rho \, \psi(r,\theta) \big]+ O(s^{2}).
\end{align*}

In order to complete the proof of the result, it is enough to compute the derivative of the function $\hat{u}_s$ along the normal vector to $\partial B$ with respect to the metric $g_{s}$. In order to do that, we need the explicit expresion of $g_s$ in a neighborhood of $\partial B$:
	
\[g_{s}=a^{2}dr^{2}+\sum_{i=1}^{N-1} a\, b_i \, dr\, d\theta_i +\sum_{i=1}^{N-1} b_i^{2}+r^{2}\left(\left(1+\chi(r)v(\theta)\right)r\right)]\, d \theta_i^2 ,\]
where 
$$a=1+sw(\theta),\ b_i = s \frac{dw}{d\theta_i} r.$$

It follows from this expression that the unit normal vector field to $\partial B$ for the metric $g_{s}$ is given by
	\[\nu_s=\big((1+s w)^{-1}+{O}(s^{2})\big)\partial_{r}+{O}(s)\partial_{\theta_{i}}= \big((1-s w)+{O}(s^{2})\big)\partial_{r}+{O}(s)\partial_{\theta_{i}} ,\]
where $\theta_{i}$ are the vector fields induced by a parameterization of $\mathbb{S}^{N-1}$. As a result,
	\begin{align*} \frac{\partial \hat{u}_s}{\partial \nu_s} &  = (1-sw(\theta)) \partial_r \Big \{ u_{\rho}(r)+s\big[ r w(\theta)\partial_{r}u_{\rho}(r) -  c_\rho \, \psi(r,\theta) \big] \Big \} \Big |_{r=1}+ O(s^{2}) \\ & =
	\partial_r u_{\rho}(1) + s \Big \{  w \, \partial_{rr }u_{\rho}(1) - c_\rho \psi_r(1,\theta) \Big \} + O(s^2).	\end{align*}
	
Recall now that $c_\rho = \partial_r u_\rho(1)$; moreover, $u_\rho$ solves \eqref{ball} and then, $\partial_{rr} u_\rho	(1)= -(N-1)u_\rho(1)= -(N-1) c_\rho$. Then,
\[  \frac{\partial \hat{u}_s}{\partial \nu_s}  =  	\partial_r u_{\rho}(1) + s \, c_\rho \Big \{-(N-1) w(\theta)  - \psi_r(1,\theta) \Big \} + O(s^2).\]
Taking into account that $\partial_r u_\rho$ is constant on $\partial B$, the result follows.
\end{proof}

\section{The behavior of the linearized operator}

In this section we study the behavior of the first eigenvalue of the operator $H_\rho$ defined in \eqref{defH} depending on the parameter $\rho$. In particular, we will show that this eigenvalue changes sign, which will be the key to show bifurcation.

\medskip

Let us define the quadratic form associated to $H_{\rho}$, namely:
\[ J_\rho: C^{2,\alpha}_{G,m}(\mathbb{S}^{N-1}) \to \R, \ J_\rho (w) = \int_{\mathbb{S}^{N-1}}w H_\rho(w). \]
We denote the first eigenvalue of the operator $H_\rho$ as
\[\tau_{1}(H_\rho)=\inf \Big\{J_\rho(w): w\in C^{2,\alpha}_{G,m}(\mathbb{S}^{N-1})~,~~\int_{\mathbb{S}^{N-1}}w^{2}=1\Big \}.\]
By the divergence formula, we get
\begin{equation} \label{QJ} J_\rho (w)=\frac{1}{\rho}Q_\rho(\psi_{\rho, w}),\end{equation}
where $\psi_{\rho,w }$ is given in \eqref{psi} and $Q_\rho:H_{G}^1(B) \to \R$ is defined as
\begin{equation} \label{Q} Q_\rho(\phi) = \int_{B}\Big ( \rho |\nabla \phi|^2 - \phi^2 + 3 (u_{\rho}^+)^2 \phi^2 \Big )+ (N-1) \rho \int_{\partial B} \phi^2.\end{equation}

Observe that $Q|_{H_{0,G}^1(B)} = Q_D$ as defined in \eqref{QD}. In most cases we will consider $Q_\rho$ restricted to the space $E$ defined in \eqref{E}. In the next lemma we establish a connection between the quadratic forms $J_\rho$ and $Q_\rho$.

\begin{lemma} \label{tau}	For any $\rho\in(\rho_{0}, \bar{\lambda}_2^{-1})$, we have
	\[\tau_{1}(H_\rho)= \min \Bigg\{\frac{1}{\rho}Q_\rho(\psi): \psi\in E ,~\int_{\partial B}\psi^{2}=1\Bigg\}.\]  Moreover the infimum is attained.
\end{lemma}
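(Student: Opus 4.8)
The plan is to relate the eigenvalue $\tau_1(H_\rho)$, defined as the infimum of $J_\rho$ over mean-zero functions on the sphere with unit $L^2(\Sp)$-norm, to a minimization of $Q_\rho$ over the space $E$ with unit $L^2(\partial B)$-norm, using the identity \eqref{QJ} which says $J_\rho(w) = \frac1\rho Q_\rho(\psi_{\rho,w})$. First I would check that the map $w \mapsto \psi_{\rho,w}$ is a linear bijection between $C^{2,\alpha}_{G,m}(\Sp)$ (equivalently, after completion, the appropriate mean-zero trace space $H^{1/2}_{G,m}(\Sp)$) and the subspace of $E$ consisting of weak solutions of the homogeneous equation $-\rho\Delta\psi - \psi + 3(u_\rho^+)^2\psi = 0$; this is exactly the content of Proposition \ref{psiv}, parts a) and b), which give existence, uniqueness, and the fact that zero-mean boundary data yield $\psi_w \in E$. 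Under this bijection the constraint $\int_{\Sp} w^2 = 1$ becomes $\int_{\partial B}\psi^2 = 1$ (identifying $\partial B$ with $\Sp$), so the two variational problems have matching constraint sets modulo this identification.

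The key step is then to show that minimizing $\frac1\rho Q_\rho(\psi)$ over \emph{all} of $E$ with $\int_{\partial B}\psi^2 = 1$ gives the same value as minimizing only over the solution subspace. This follows from a standard "harmonic replacement" / Dirichlet-principle argument: given any $\psi \in E$ with prescribed trace $w = \psi|_{\partial B}$ (which lies in $H^{1/2}_{G,m}$ by Lemma \ref{E}, since $\psi \in E$ forces all spherical modes to start at $k=1$), replace $\psi$ by $\psi_{\rho,w}$, the unique solution of \eqref{psi} with the same boundary data. By Remark \ref{remark}, for $\rho \in (\rho_0, \bar\lambda_2^{-1})$ the quadratic form $Q_D$ is positive definite on $E_0$; since $\psi - \psi_{\rho,w} \in E_0$ and the bilinear form associated to $Q_\rho$ (whose boundary term depends only on the trace, hence is unchanged) satisfies the Euler--Lagrange orthogonality $\mathcal{B}(\psi_{\rho,w}, \phi) = 0$ for all $\phi \in E_0$, we get $Q_\rho(\psi) = Q_\rho(\psi_{\rho,w}) + Q_D(\psi - \psi_{\rho,w}) \geq Q_\rho(\psi_{\rho,w})$. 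Hence the infimum over $E$ is attained precisely on solutions, and equals $\tau_1(H_\rho)$.

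It remains to argue the infimum is actually attained. I would take a minimizing sequence $\psi_n \in E$ (which we may assume to be solutions after the replacement above) with $\int_{\partial B}\psi_n^2 = 1$; the positive-definiteness of $Q_D$ on $E_0$ together with control of the boundary term gives a uniform $H^1$ bound, so up to a subsequence $\psi_n \weakto \psi$ weakly in $H^1_G(B)$. Weak lower semicontinuity of the gradient term, compactness of the trace embedding $H^1(B) \hookrightarrow L^2(\partial B)$ (preserving the constraint $\int_{\partial B}\psi^2 = 1$ and the mean-zero condition, so $\psi \neq 0$), and strong $L^2(B)$ convergence handle the remaining terms; the space $E$ is weakly closed. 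The main obstacle I anticipate is ensuring the correct functional-analytic setup for the boundary minimization — i.e. verifying that $Q_\rho$ is coercive on $E$ relative to the $L^2(\partial B)$ constraint (not merely bounded below), which requires combining the $E_0$-positivity from Remark \ref{remark} with a trace/Poincaré-type inequality on $E$; once coercivity is secured, the direct method and the harmonic-replacement identity close the argument routinely.
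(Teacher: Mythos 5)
Your harmonic-replacement argument is correct and is a genuinely different (and arguably cleaner) route to the identity $\inf_E \frac1\rho Q_\rho = \inf J_\rho$ than what the paper does: the paper first shows the infimum is attained, then applies the Lagrange multiplier rule and a density argument (Lemma \ref{E}) to show the minimizer solves the Robin problem \eqref{sol}, and only then reads off the equality. Your decomposition $Q_\rho(\psi) = Q_\rho(\psi_{\rho,w}) + Q_D(\psi-\psi_{\rho,w})$ with the orthogonality of the cross term against $E_0$ delivers the equality of infima without needing a minimizer to exist first. (One small thing you should spell out: for this to cover the actual definition of $\tau_1$, which is an infimum over $C^{2,\alpha}_{G,m}$, you need either density of $C^{2,\alpha}_{G,m}$ in $H^{1/2}_{G,m}$ plus continuity of $J_\rho$ in $H^{1/2}$, or the elliptic regularity statement that the minimizer has $C^{2,\alpha}$ trace — the paper goes the regularity route.)

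However, the attainment part as you wrote it has a real gap, and it is exactly the one you flag at the end. The claim ``the positive-definiteness of $Q_D$ on $E_0$ together with control of the boundary term gives a uniform $H^1$ bound'' does not follow as stated: $Q_D$ is positive definite on $E_0$, but your minimizing sequence lives in $E$, not $E_0$, and the constraint only controls the trace in $L^2(\partial B)$, which by itself gives no $H^1(B)$ bound because of the negative $-\int_B \psi^2$ term in $Q_D$. One cannot simply cite a ``trace/Poincaré-type inequality on $E$'' — no such elementary inequality is available without further work. What the paper actually does is a compactness-contradiction argument: assume $\|\psi_n\|_{H^1}\to\infty$, set $\xi_n = \psi_n/\|\psi_n\|_{H^1}$, observe $\int_{\partial B}\xi_n^2 = \|\psi_n\|_{H^1}^{-2}\to 0$ so the weak limit $\xi_0$ lies in $H^1_{0,G}\cap E = E_0$, and then split into two cases. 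If $\xi_0 = 0$, strong $L^2(B)$ convergence kills the negative term and the gradient term forces $Q_\rho(\psi_n)\to+\infty$. If $\xi_0\neq 0$, weak lower semicontinuity gives $\liminf Q_\rho(\psi_n) \geq \|\psi_n\|^2_{H^1} Q_D(\xi_0)$ with $Q_D(\xi_0)>0$ by Remark \ref{remark}, again forcing $Q_\rho(\psi_n)\to+\infty$. Both cases contradict $Q_\rho(\psi_n)\to\eta<\infty$. This dichotomy is the missing ingredient in your write-up; once it is in place, the rest of your argument closes the lemma.
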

\begin{proof}
	Let us define
	\begin{equation*}
		\eta:=\inf\Bigg\{Q_\rho(\psi):\psi\in E,\int_{\partial B}\psi^{2}= 1 \Bigg\} \in [-\infty, +\infty).
	\end{equation*}

	We first show that $\eta$ is achieved. On that purpose, let us take $\psi_{n}\in E$ such that $Q_\rho^D(\psi_n)\rightarrow\eta .$
	We claim that $\psi_n$ is bounded. Reasoning by contradiction, if $\|\psi_n\|_{H_{G}^1}\rightarrow +\infty$, we define $\xi_n=\|\psi_n\|_{H^1}^{-1}\psi_n$. Up to a subsequence, we can assume that $\xi_n\rightharpoonup \xi_{0}.$ Notice that $\int_{\partial B_1}\xi_n^{2}\rightarrow0,$ which yields that $\xi_{0}\in H^{1}_{0,G}(B).$ We also point out that by compactness,
	\[\int_{B} (u_\rho^+)^{2}\xi_n^{2}\rightarrow\int_{B }(u_\rho^+)^{2}\xi_{0}^{2}, \ \ 	\int_{B} \xi_n^{2}\rightarrow\int_{B }\xi_{0}^{2} .\]
	Let us distinguish two cases:\\
	\textbf{Case 1:} $\xi_{0}=0.$ In this case
	\[Q_\rho(\psi_n)=\|\psi_n\|_{H^1}^{2}\int_{B}\big(\rho|\nabla\xi_n|^{2} -\xi_n^{2} + 3 (u_\rho^+)^{2}\xi_n^{2}\big) + (N-1) \rho \rightarrow+\infty\,,\]
	which is impossible.\\
	\textbf{Case 2:} $\xi_{0}\neq 0.$ Then,
	\begin{align*}
		\mathop {\liminf}\limits_{m\rightarrow\infty}Q_\rho(\psi_n)&=\mathop {\liminf}\limits_{m\rightarrow\infty}
		\|\psi_n\|_{H^1}^{2}\int_{B}\big(\rho|\nabla\xi_n|^{2} -\xi_n^{2} + 3 (u_\rho^+)^{2}\xi_n^{2}\big) + (N-1) \rho \\
		&\geq\mathop {\liminf}\limits_{m\rightarrow\infty}\|\psi_n\|_{H^1}^{2} Q_\rho(\xi_{0})\,,
	\end{align*}
	but $Q_\rho(\xi_{0})=Q_\rho^D(\xi_{0})>0$ for $\rho\in(\rho_{0}, \bar{\lambda}_2^{-1})$, by the definition of $\rho_0$ (see Remark \ref{remark}). This is again a contradiction.
	
	Thus, $\psi_n$ is bounded, so up to a subsequence we can pass to the weak limit $\psi_n\rightharpoonup\psi.$ By compactness, $\int_{\Sp} \psi^2=1$, and then $\psi$ is a minimizer for $\eta$. In particular $\eta>-\infty.$
	
	\medskip
	
	Since $\psi$ is a minimizer, by the Lagrange multiplier rule, there exist $\kappa \in \R$  so that for any $g \in E$,
	\begin{equation} \label{weak2} \int_{B}\Big ( \rho \nabla \psi \cdot \nabla g - \psi g  + 3 (u_{\rho}^+)^2 \psi g \Big )+ (N-1) \rho \int_{\partial B} \psi g = \kappa \int_{\partial B} \psi g. \end{equation}
	
	By testing $g= \psi$ in the identity above we conclude that $\kappa = \eta$. Now we claim that \eqref{weak2} holds also if $g \in H^1_r(B)$. By density, it suffices to take a radial function $g \in C^2(\overline{B})$. Observe that, by Lemma \ref{E},
	$$ \int_{B}  \psi g  + 3 (u_{\rho}^+)^2 \psi g =0, \ \ \int_{\partial B} \psi g =0.$$
	
	Moreover, if $g$ is radially symmetric, also $\Delta g$ is radial and then
	
	$$ \int_{B} \rho \nabla \psi \cdot \nabla g = \int_{\partial B} \psi \partial_{\nu} g - \int_B \Delta g \psi =0,$$
	again by Lemma \ref{E}. 
	
Then we have shown that \eqref{weak2} holds for any $g \in H^1_{G}(B)$. In other words, $\psi$ is a (weak) solution of the problem:
\begin{equation} \label{sol}
	\begin{cases}
		- \rho \Delta \psi - \psi + 3 (u_\rho^+)^2 \psi=0, & \mbox{ in }B, \\
		\partial_{\nu}\psi + (N-1) \psi= \eta \, \psi. &\mbox{on }\partial B. \end{cases}
\end{equation}

By the regularity theory, $\psi\in C^{2,\alpha}_{G}(B)$. Define $w = \psi|_{\partial B}$; clearly, $$\int_{\partial B} w^2=1$$ and $\psi= \psi_w$ as defined in Proposition \ref{psiv}. Then:
	$$ J_\rho(w)= \frac{1}{\rho} Q_\rho(\psi)= \frac{1}{\rho} \eta\,,$$
which implies that $\tau_1(H_\rho) \leq \frac{1}{\rho} \eta$. 

But the other inequality $\tau_1(H_\rho) \geq \frac{1}{\rho} \eta$ follows directly form the relation \eqref{QJ}. The proof is completed.
\end{proof}

Taking into account the previous result, we are now concerned with the behavior of the quadratic form $Q$ restricted to the space $E$ defined in \eqref{E}. This is the purpose of the following result.

\begin{proposition} \label{linear2} The following assertions hold true:
\begin{enumerate}	\item[a)] There exists $\e>0$ such that for $\rho \in (\bar{\lambda}_2^{-1}-\e,\bar{\lambda}_2^{-1})$, $Q|_E$ is positive definite.

\item[b)] There exists $\e>0$ such that for any $\rho \in (\rho_0, \rho_0+\e)$ there exists some $\psi \in E$ such that $Q(\psi)<0$.

\end{enumerate}	

\end{proposition}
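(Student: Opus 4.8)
For part (a), the plan is to simply discard the two nonnegative terms $3\int_B(u_\rho^+)^2\phi^2$ and $(N-1)\rho\int_{\partial B}\phi^2$ appearing in $Q_\rho$ in \eqref{Q}, so that it suffices to show $\int_B\rho|\nabla\phi|^2-\phi^2>0$ for every nonzero $\phi\in E$ once $\rho$ is close to $\bar\lambda_2^{-1}$. I would expand $\phi=\sum_{k\ge 1}f_k(r)\vartheta_k(\theta)$ as in Lemma \ref{E} and, using the orthonormality of the $\vartheta_k$, split both $\int_B|\nabla\phi|^2$ and $\int_B\phi^2$ into sums over $k$, bounding the Rayleigh quotient of each mode $f_k\vartheta_k$ from below by the bottom Neumann eigenvalue of the angular mode $\gamma_k$. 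These Neumann eigenvalues are nondecreasing in $\gamma_k$, hence all bounded below by the one attached to $\gamma_1$, which by assumption \ref{G} — this is precisely the inequality already invoked in the proof of Proposition \ref{linear1}(c), now read as a uniform gap — is a constant $\sigma_*>\bar\lambda_2$. Thus $\int_B|\nabla\phi|^2\ge\sigma_*\int_B\phi^2$ on $E$, so $Q_\rho|_E\ge(\rho\sigma_*-1)\|\cdot\|_{L^2(B)}^2$, which is positive definite for every $\rho\in(\sigma_*^{-1},\bar\lambda_2^{-1})$; part (a) then follows with $\e=\bar\lambda_2^{-1}-\sigma_*^{-1}>0$.

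For part (b) the mechanism is entirely different: the degeneracy of the Dirichlet problem at $\rho_0$ should be detected by test functions with nonzero boundary trace. By Remark \ref{remark}, $\mu_2(\rho_0)=0$, so I would fix a nonzero $\varphi$ in the kernel of the operator $L$ of \eqref{L} at $\rho=\rho_0$. Decomposing $\varphi$ into its radial average and remainder and using that $0$ is not a radial eigenvalue (since $\bar\mu_1<0<\bar\mu_2$ by Proposition \ref{linear1}(a)), one sees that $\varphi$ has no radial part, so $\varphi\in E_0\subset E$; by Schauder, $\varphi\in C^{2,\alpha}(\overline B)$. Moreover $\partial_\nu\varphi\not\equiv 0$ on $\partial B$ — otherwise unique continuation (equivalently, uniqueness for each radial Cauchy problem $f_k(1)=f_k'(1)=0$) forces $\varphi\equiv 0$ — and $\partial_\nu\varphi$ has zero mean on $\Sp$ because $\varphi\in E_0$. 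I would then pick $w\in C^{2,\alpha}_{G,m}(\Sp)$ with $\int_{\Sp}w^2=1$ and $c_0:=\int_{\Sp}(\partial_\nu\varphi)\,w\neq 0$, and let $\phi$ be the harmonic extension of $w$, which lies in $E\cap C^{2,\alpha}_G(B)$ since $w$ is $G$-symmetric with zero mean. For the one-parameter family $\psi_s:=s\varphi+\phi\in E$ one has $\int_{\partial B}\psi_s^2=\int_{\Sp}w^2=1$ (as $\varphi|_{\partial B}=0$), and $Q_\rho(\psi_s)=\alpha(\rho)s^2+2\beta(\rho)s+\gamma(\rho)$, where $\alpha(\rho)=Q_\rho(\varphi)$, $\beta(\rho)=\mathcal B_\rho(\varphi,\phi)$ (the bilinear form of $Q_\rho$), and $\gamma(\rho)=Q_\rho(\phi)$.

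The heart of the argument is the behaviour of these coefficients as $\rho\to\rho_0^+$. Integrating by parts, using $\varphi|_{\partial B}=0$ and $L\varphi=0$ at $\rho_0$, gives $Q_{\rho_0}(\varphi)=0$, $\mathcal B_{\rho_0}(\varphi,\phi)=\rho_0 c_0$, and more generally $\alpha(\rho)=\int_B(L_\rho\varphi)\varphi$ and $\beta(\rho)=\int_B(L_\rho\varphi)\phi+\rho c_0$; since $u_\rho$ depends on $\rho$ in a $C^1$ fashion (Proposition \ref{linear1}(b)), $L_\rho\varphi\to L_{\rho_0}\varphi=0$, so $\alpha(\rho)\to 0$, $\beta(\rho)\to\rho_0 c_0\neq 0$, while $\gamma(\rho)\to Q_{\rho_0}(\phi)$ stays bounded. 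Crucially, $\alpha(\rho)>0$ for $\rho$ slightly above $\rho_0$: since $\varphi\in E_0$ and — by the definition of $\rho_0$ together with $\bar\mu_1<0<\bar\mu_2$ — the number $\mu_2(\rho)>0$ is the bottom of the spectrum of $L$ restricted to the non-radial space $E_0$ when $\rho$ is near $\rho_0$, one gets $\alpha(\rho)=Q_\rho(\varphi)\ge\mu_2(\rho)\int_B\varphi^2>0$. Minimizing the quadratic in $s$ then yields $\min_s Q_\rho(\psi_s)=\gamma(\rho)-\beta(\rho)^2/\alpha(\rho)\to-\infty$, so for some $\e>0$ and every $\rho\in(\rho_0,\rho_0+\e)$ the function $\psi:=-\beta(\rho)\alpha(\rho)^{-1}\varphi+\phi\in E$ satisfies $Q_\rho(\psi)<0$, which is exactly (b).

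The step I expect to be the main obstacle is precisely this last point of (b): verifying that $\alpha(\rho)=Q_\rho(\varphi)$ is \emph{strictly} positive (not merely $o(1)$) for $\rho$ just above $\rho_0$, since it is this positivity that drives $\beta(\rho)^2/\alpha(\rho)$ to $+\infty$. This is where one genuinely uses that $\rho_0$ is defined through the second eigenvalue $\mu_2$ and that the radial part of the spectrum of $L$ stays bounded away from $0$ near $\rho_0$, so that $\mu_2(\rho)$ is the bottom of the non-radial spectrum there; checking that a legitimate boundary datum $w$ exists (i.e. that $\partial_\nu\varphi\not\equiv 0$) is a lesser but necessary technical point.
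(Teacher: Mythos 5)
Your argument for part (a) is essentially the paper's: drop the two nonnegative terms, so it suffices to show $\int_B \rho|\nabla\phi|^2-\phi^2>0$ on $E$, and observe that by \ref{G} the relevant Neumann Rayleigh quotient on $E$ is bounded below by $\sigma>\bar\lambda_2$ (the paper states this in one line via the form $Q_N$; your Fourier decomposition and the monotonicity of the per-mode Neumann eigenvalue in $\gamma_k$ is a correct and slightly more detailed rendering of the same fact).

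For part (b) your route is genuinely different from, though closely related to, the paper's. The paper argues by contradiction directly at $\rho=\rho_0$: if $Q_{\rho_0}$ were semipositive definite on $E$, the kernel function $\psi\in E_0$ with $Q_D(\psi)=0$ would be a minimizer of $Q_{\rho_0}$ on $E$, hence (after upgrading the Euler--Lagrange equation from test functions in $E$ to all of $H^1_G$) a weak solution with $\partial_\nu\psi=0$; combined with $\psi|_{\partial B}=0$, unique continuation forces $\psi\equiv 0$, a contradiction. It then concludes negativity for $\rho$ near $\rho_0$ by continuity. You instead build the negativity constructively: take the kernel function $\varphi\in E_0$, show $\partial_\nu\varphi\not\equiv 0$ (again a Cauchy-uniqueness argument, so morally the same tool the paper uses), pick a harmonic extension $\phi$ of a boundary datum $w$ that pairs nontrivially with $\partial_\nu\varphi$, and examine the quadratic $s\mapsto Q_\rho(s\varphi+\phi)$. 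This is correct, and in fact it gives a slightly cleaner route than you suggest: since $\alpha(\rho_0)=0$ while $\beta(\rho_0)=\rho_0 c_0\neq 0$, the function $Q_{\rho_0}(\psi_s)=2\beta(\rho_0)s+\gamma(\rho_0)$ is already linear and unbounded below, so $Q_{\rho_0}$ takes negative values on $E$ without any passage to $\rho>\rho_0$; continuity then immediately gives statement (b). Your extra step — establishing $\alpha(\rho)>0$ via $\mu_2(\rho)$ being the bottom of the nonradial spectrum, and then sending $\beta^2/\alpha\to\infty$ — is a valid but unnecessary detour, and it is not, as you flag, the crux of the matter; the crux is just $\partial_\nu\varphi\not\equiv 0$, which you handle correctly. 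Both your proof and the paper's rely on the same underlying mechanism (the Dirichlet kernel function cannot simultaneously have vanishing normal derivative), so they buy the same thing; yours is more explicit about the direction in $E$ along which $Q_\rho$ becomes negative, while the paper's is shorter because it never leaves the variational framework.
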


\begin{proof}
The proof of a) is basically a repetition of the argument to prove Proposition \ref{linear1}, c). We consider the quadratic form associated to homogeneous Neumann boundary conditions $Q_N:H_{G}^1(B) \to \R,$
\begin{equation*} Q_N(\phi) = \int_{B} \rho |\nabla \phi|^2 - \phi^2.\end{equation*}

Since $Q \geq Q_N$, it suffices to show that $Q_N$ is positive definite in $E$ for $\rho$ close to $\bar{\l}_2^{-1}$.
But this is immediate from the assumptions on the symmetry gropu $G$, since the first nonradial eigenvalue of the Laplacian operator with Neumann boundary conditions for $G$-symmetric functions is higher than $\bar{\l}_2$.

\medskip We now prove b). Let us point out that the quadratic form $Q$, as defined in \eqref{Q}, is perfectly well defined in $E$ even for $\rho=\rho_0$. And recall that $\rho_0$ is a point of degeneracy of $L$, that is, $Q^D$ is semipositive definite in the space $E_0$ (defined in \eqref{defE0}) and $Q^D(\psi)=0$ for some $\psi \in E_0$, $\psi \neq 0$. 

\medskip {\bf Claim: } We claim that $Q$ becomes negative in $E$ for $\rho= \rho_0$. Otherwise, $\psi$ would also be a minimizer for $Q$, and hence a critical point restricted to $E$. That is, for any $g \in E$, 
\begin{equation} \label{weak} \int_{B} \Big ( \rho_0 \nabla \psi \cdot \nabla g - \psi \phi + 3 (u_{\rho_0}^+)^2 \psi g \Big ) =0.
\end{equation}

Let us point out that the boundary term of the form $Q$ vanishes since $\psi=0$ on $\Sp$. We now claim that \eqref{weak} holds also if $g \in H^1_r(B)$ (this is the same argument as in the proof of Lemma \ref{tau}). By density, it suffices to consider radially symmetric functions $g \in C^2(\overline{B})$. By the definition of the space $E$ in \ref{defE}, we have that:

$$ \int_{B} \Big ( - \psi g + 3 (u_{\rho_0}^+)^2 \psi g  \Big )=0.$$

Moreover, if $g$ is radial, also $\Delta g$ is radially symmetric, and then:

\begin{eqnarray*} \int_{B} \rho_0 \nabla \psi \cdot \nabla g  = \rho_0 \left [\int_B - \psi \Delta g  + \int_{\partial B}  \psi \partial_{\nu} g    \right ] = 0. 
\end{eqnarray*}

Then, $\psi$ satisfies \eqref{weak} for any $\phi \in H^1(B)$, which implies that $\psi$ is a weak solution of the problem:

$$\begin{cases}
	- \rho_0 \Delta \psi - \psi + 3 (u_\rho^+)^2 \psi=0, & \mbox{ in }B, \\
	 \partial_{\nu} \psi =0 &\mbox{on }\partial B. \end{cases}$$

A standard bootstrap argument allows us to obtain $C^{2, \alpha}$ regularity of $\psi$. Recall now that $\psi=0$ on $\partial B$: by the unique continuation argument we conclude that $\psi=0$, a contradiction. The claim is proved.

\medskip 

By the claim, there exists $\psi \in E$ with $Q(\psi)<0$ for $\rho = \rho_0$. By a continuity argument, the same inequality holds for $\rho > \rho_0$ sufficiently close to $\rho_0$, and this proves b).

\end{proof}

The previous result allows us to define
\begin{equation*}
	\rho_{\ast}:=\sup\left\{\rho\in (\rho_{0},\bar{\lambda}_{2}^{-1}):Q_\rho(\psi)< 0 ~\mbox
	{for some}~\psi\in E\right\} \in (\rho_{0},\bar{\lambda}_{2}^{-1}).
\end{equation*}
From Proposition \ref{linear2}, Lemma \ref{tau} and the definition of $\rho_\ast$ we have the following result, which is the key to prove Theorem \ref{main}.
\begin{proposition} \label{keybif}
	The following assertions hold true:
	\begin{itemize}
		\item[(i)] There exists $\e>0$ such that for $\rho \in (\bar{\lambda}_2^{-1}-\e,\bar{\lambda}_2^{-1})$, $\tau_1(H_\rho)>0$.
		\item[(ii)] If $\rho \geq \rho_{\ast},$ then $\tau_{1}(H_\rho) \geq 0$.
		\item[(iii)] If $\rho = \rho_{\ast},$ then $\tau_{1}(H_\rho) = 0$.
		\item[(iv)] For any $\e>0$ there exists $\rho \in (\rho_{\ast}- \e, \rho_{\ast}),$ with $\tau_{1}(H_\rho)<0.$
	\end{itemize}
\end{proposition}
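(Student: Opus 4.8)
The plan is to deduce all four items directly from the material already assembled, principally Proposition \ref{linear2}, Lemma \ref{tau}, and the definition of $\rho_\ast$. By Lemma \ref{tau}, for $\rho\in(\rho_0,\bar\lambda_2^{-1})$ the sign of $\tau_1(H_\rho)$ coincides with the sign of $\eta=\min\{Q_\rho(\psi):\psi\in E,\ \int_{\partial B}\psi^2=1\}$, so each statement about $\tau_1(H_\rho)$ translates into a statement about whether $Q_\rho$ is positive definite, semipositive definite, or negative somewhere on $E$. Item (i) is then immediate from Proposition \ref{linear2}, a): for $\rho$ close to $\bar\lambda_2^{-1}$ the form $Q_\rho$ is positive definite on $E$, hence $\eta>0$ (the minimum over the sphere $\int_{\partial B}\psi^2=1$ of a positive definite form is attained and strictly positive by Lemma \ref{tau}), so $\tau_1(H_\rho)=\frac1\rho\eta>0$.

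For item (ii), I would argue directly from the definition of $\rho_\ast$: if $\rho\geq\rho_\ast$, then by maximality of $\rho_\ast$ there is no $\psi\in E$ with $Q_\rho(\psi)<0$ — except that one must be slightly careful, since the supremum defining $\rho_\ast$ concerns the open interval and values $\rho>\rho_\ast$. The clean way is: for $\rho\in(\rho_\ast,\bar\lambda_2^{-1})$, $Q_\rho\geq 0$ on $E$ by definition of the supremum, so $\eta\geq 0$, hence $\tau_1(H_\rho)\geq 0$; for $\rho=\rho_\ast$ itself one uses a continuity/limiting argument (see below). Either way, $\tau_1(H_\rho)\geq 0$ for $\rho\geq\rho_\ast$. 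Item (iv) is a direct unwinding of the supremum: for every $\e>0$ there exists $\rho\in(\rho_\ast-\e,\rho_\ast)$ with $Q_\rho(\psi)<0$ for some $\psi\in E$, and then Lemma \ref{tau} gives $\eta<0$, hence $\tau_1(H_\rho)<0$; here one should also note that $\rho_\ast>\rho_0$ by Proposition \ref{linear2}, b), so such $\rho$ lie in the admissible interval $(\rho_0,\bar\lambda_2^{-1})$ where Lemma \ref{tau} applies.

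The one genuinely delicate point — and the step I expect to be the main obstacle — is item (iii), the assertion that equality $\tau_1(H_{\rho_\ast})=0$ holds exactly at $\rho_\ast$, i.e. ruling out that $\tau_1$ jumps strictly below $0$ at the endpoint. By item (ii) we already have $\tau_1(H_{\rho_\ast})\geq 0$ (this requires showing $Q_{\rho_\ast}\geq 0$ on $E$, which follows by taking $\rho_n\downarrow\rho_\ast$, using that $Q_{\rho_n}\geq 0$ on $E$ and that $Q_\rho(\psi)$ depends continuously — indeed affinely — on $\rho$ for fixed $\psi$, together with the fact established in the proof of Lemma \ref{tau} that the minimizing sequences are bounded and $Q_{\rho_\ast}$ is bounded below on $\{\int_{\partial B}\psi^2=1\}$). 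For the reverse inequality $\tau_1(H_{\rho_\ast})\leq 0$, I would take a sequence $\rho_n\uparrow\rho_\ast$ with $\tau_1(H_{\rho_n})<0$ (which exists by item (iv)), let $w_n$ be the associated minimizers on $\Sp$ with $\int_{\Sp}w_n^2=1$ and corresponding $\psi_n=\psi_{\rho_n,w_n}\in E$. One then needs uniform bounds on $\psi_n$ in $H^1_G(B)$ — obtained exactly as in the proof of Lemma \ref{tau}, since the argument there is uniform for $\rho$ in compact subintervals and the limiting form $Q_{\rho_\ast}^D$ is still positive definite on $E_0$ — pass to a weak limit $\psi_n\rightharpoonup\psi\in E$, use compactness of the trace and of the lower-order terms to get $\int_{\Sp}\psi^2=1$ and $\limsup Q_{\rho_n}(\psi_n)\geq Q_{\rho_\ast}(\psi)$ reversed appropriately via weak lower semicontinuity of the gradient term, concluding $Q_{\rho_\ast}(\psi)\leq\liminf Q_{\rho_n}(\psi_n)\leq 0$. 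Hence $\eta_{\rho_\ast}\leq 0$ and $\tau_1(H_{\rho_\ast})\leq 0$, which combined with item (ii) gives equality. The care needed is in matching the direction of the semicontinuity inequality with the sign of the parameter perturbation $\rho_n\to\rho_\ast$ in the term $\rho|\nabla\psi|^2$; since $\rho_n<\rho_\ast$ one has $Q_{\rho_n}(\psi_n)\geq Q_{\rho_n}$ evaluated with the gradient term only growing when $\rho$ increases, so some bookkeeping is required, but no new idea.
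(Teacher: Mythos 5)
Your proposal is correct and follows the route the paper leaves implicit: the paper offers no written proof and simply declares the proposition to be a consequence of Proposition~\ref{linear2}, Lemma~\ref{tau} and the definition of $\rho_\ast$, which is exactly what you unwind. You correctly single out (iii) as the only step requiring an argument, and the compactness scheme you sketch is the right one: take $\rho_n\uparrow\rho_\ast$ with $\tau_1(H_{\rho_n})<0$ and minimizers $\psi_n\in E$, $\int_{\partial B}\psi_n^2=1$; rerun the boundedness contradiction from Lemma~\ref{tau} (which goes through because $u_{\rho_n}\to u_{\rho_\ast}$ in $C^4$ by Proposition~\ref{linear1}\,b) and $Q^D_{\rho_\ast}$ is positive definite on $E_0$ by Remark~\ref{remark}, since $\rho_\ast\in(\rho_0,\bar{\lambda}_2^{-1})$); pass to a weak limit $\psi$, use trace compactness to keep $\int_{\partial B}\psi^2=1$, and weak lower semicontinuity of $\int|\nabla\cdot|^2$ together with $\rho_n\to\rho_\ast$ to get $Q_{\rho_\ast}(\psi)\le \liminf Q_{\rho_n}(\psi_n)\le 0$; the sign bookkeeping you worry about is harmless precisely because $\rho_n\to\rho_\ast$, not merely $\rho_n\le\rho_\ast$. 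Two small polish remarks. In (ii) the boundedness discussion you add is unnecessary: for each fixed $\psi\in E$ with $\int_{\partial B}\psi^2=1$ the map $\rho\mapsto Q_\rho(\psi)$ is affine, so $Q_{\rho_\ast}(\psi)=\lim_{\rho\downarrow\rho_\ast}Q_\rho(\psi)\ge 0$ directly. And (iv) is not quite a ``direct unwinding'' of the supremum: the definition only yields, for each $\e>0$, some $\rho\in(\rho_\ast-\e,\rho_\ast]$ in the defining set, and to exclude $\rho=\rho_\ast$ you should add a line for the case when $\rho_\ast$ itself lies in the set (fix a witness $\psi$ with $Q_{\rho_\ast}(\psi)<0$ and use the same affine continuity in $\rho$ to find $\rho<\rho_\ast$ with $Q_\rho(\psi)<0$).
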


\section{Proof of Theorem \ref{main}}

In this section we use the previous analysis to conclude the proof of Theorem \ref{main}. In particular, 
Proposition \ref{keybif} is the key to prove the local bifurcation result, thanks to the Krasnoselskii bifurcation theorem. We state it below in a version which is suited for our purposes: for a proof, we refer to ~\cite{K04,S94} (see also \cite[Remark 6.3]{RRS20}).

\begin{theorem}[Krasnoselskii Bifurcation Theorem] \label{Kras} 
	Let $\mathcal{Y}$ be a Banach space, and let $\mathcal{W}\subset \mathcal{Y}$ and $\Gamma\subset\mathbb{R}$ be open sets, where we assume $0\in\mathcal{W}$. Denote the elements of $\mathcal{W}$ by $w$ and the elements of $\Gamma$ by $\rho$. Let $T:\Gamma \times \mathcal{W}\rightarrow \mathcal{Y}$ be a $C^{1}$ operator such that
	\begin{itemize}
		\item[i)] $T(\rho,0)=0$ for all $\rho\in\Gamma;$
		\item[ii)] $T(\rho,w)=w-K(\rho, w)$, where $K(\rho, w)$ is a compact map;
		\item[iii)] Let us denote by $i(\rho) $ the sum of the multiplicities of all negative eigenvalues of $D_{w}T(\rho,0)$. Assume that there exist $\rho_1, \ \rho_2 \in \Gamma$, ${\rho_1} <{\rho_2}$ such that:
		\begin{enumerate}
			\item $D_{w}T({\rho_i},0)$ are non degenerate, $i=1, \ 2$.
			\item $i({\rho_1})$ and $i({\rho_2})$ have different parity.
		\end{enumerate}
		
	\end{itemize}
	Then there exist $\tilde{\rho} \in ({\rho}_1, {\rho}_2)$ and a sequence $(\rho_n, w_n) \in \mathcal{Y}\times\mathbb{R}$, $w_n \neq 0$, such that $(\rho_n, w_n) \to (\tilde{\rho},0)$ and $T(\rho_n, w_n)=0$. 
\end{theorem}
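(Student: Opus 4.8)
The plan is to derive the statement from Leray--Schauder degree theory by a contradiction argument; this is the classical route, and hypothesis ii) --- namely $T(\rho,w)=w-K(\rho,w)$ with $K$ compact --- is precisely what makes the degree of $w\mapsto T(\rho,w)$ on small balls well defined. Here $B_r\subset\mathcal Y$ denotes the open ball of radius $r$ about $0$, and since $0\in\mathcal W$ is interior, $\overline{B_r}\subset\mathcal W$ for all small $r$. First I would suppose, for contradiction, that the conclusion fails. Then every $\tilde\rho\in(\rho_1,\rho_2)$ fails to be a bifurcation point, so $(\tilde\rho,0)$ has a neighbourhood in $\Gamma\times\mathcal W$ in which the only zeros of $T$ are the trivial pairs $(\rho,0)$. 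At the two endpoints the same holds automatically: by iii).1 the linearization $D_wT(\rho_i,0)=I-D_wK(\rho_i,0)$ is invertible, so the Implicit Function Theorem (applicable since $T$ is $C^1$) shows the trivial branch is isolated near $(\rho_i,0)$. Covering the compact segment $[\rho_1,\rho_2]\times\{0\}$ by finitely many such neighbourhoods, I extract a single radius $\delta>0$ with
\[
T(\rho,w)\neq 0 \qquad\text{for all } \rho\in[\rho_1,\rho_2] \text{ and } 0<\|w\|\le\delta .
\]

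Next I would invoke homotopy invariance of the degree. For each $\rho\in[\rho_1,\rho_2]$ the map $w\mapsto T(\rho,w)=w-K(\rho,w)$ is a compact perturbation of the identity that does not vanish on $\partial B_\delta$, so $\deg\bigl(T(\rho,\cdot),B_\delta,0\bigr)$ is defined; and $(t,w)\mapsto T\bigl((1-t)\rho_1+t\rho_2,w\bigr)$ is an admissible compact homotopy, zero-free on $\partial B_\delta$ by the displayed inequality. Hence
\[
\deg\bigl(T(\rho_1,\cdot),B_\delta,0\bigr)=\deg\bigl(T(\rho_2,\cdot),B_\delta,0\bigr).
\]

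Then I would compute each side by the Leray--Schauder index formula. For $i=1,2$ the origin is the unique zero of $T(\rho_i,\cdot)$ in $\overline{B_\delta}$ and the linearization there is invertible, so $\deg\bigl(T(\rho_i,\cdot),B_\delta,0\bigr)=(-1)^{m_i}$, where $m_i$ is the sum of the algebraic multiplicities of the eigenvalues of $D_wK(\rho_i,0)$ lying in $(1,+\infty)$. Since $\mu$ is such an eigenvalue exactly when $1-\mu$ is a negative eigenvalue of $D_wT(\rho_i,0)$, with matching generalized eigenspaces, one gets $m_i=i(\rho_i)$. By iii).2 the integers $i(\rho_1)$ and $i(\rho_2)$ have opposite parity, so the two degrees are $+1$ and $-1$, contradicting the homotopy identity. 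Therefore nontrivial solutions $(\rho_n,w_n)$ with $w_n\neq0$ and $(\rho_n,w_n)\to(\tilde\rho,0)$ must exist for some $\tilde\rho\in[\rho_1,\rho_2]$; the Implicit Function Theorem at the endpoints then forces $\tilde\rho\in(\rho_1,\rho_2)$.

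I expect the main difficulty to be technical bookkeeping rather than anything conceptual: justifying the extraction of the uniform radius $\delta$ (which combines compactness of the parameter interval with the Implicit Function Theorem near the endpoints to control the trivial branch there) and invoking correctly the Leray--Schauder index formula that identifies the local degree with $(-1)^{i(\rho_i)}$. For the latter one may cite~\cite{K04,S94}, or else smooth $K$ and reduce to a finite-dimensional Brouwer degree computed on the generalized eigenspace of $D_wT(\rho_i,0)$ associated with its negative eigenvalues, where the sign can be read off directly.
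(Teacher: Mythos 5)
The paper does not actually prove Theorem \ref{Kras}: it is quoted as a known result, with the proof delegated to \cite{K04,S94} (see also \cite[Remark 6.3]{RRS20}). Your degree-theoretic argument is exactly the classical proof found in those references, and it is correct: negating the conclusion and using the Implicit Function Theorem at $\rho_1,\rho_2$ gives the uniform radius $\delta$ with no nontrivial zeros for $\rho\in[\rho_1,\rho_2]$, $0<\|w\|\le\delta$; homotopy invariance of the Leray--Schauder degree on $B_\delta$ then contradicts the index formula $\deg\bigl(T(\rho_i,\cdot),B_\delta,0\bigr)=(-1)^{i(\rho_i)}$ together with hypothesis iii).2, and the endpoint analysis places the bifurcation point in the open interval. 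If this were written out in full, only two routine points should be made explicit: the admissibility of the homotopy requires $K$ to be compact jointly in $(\rho,w)$ on $[\rho_1,\rho_2]\times\overline{B_\delta}$ (the intended reading of ii)), and the index formula applies because $D_wK(\rho_i,0)$ is a compact linear operator (being the derivative of a compact $C^1$ map), so that $D_wT(\rho_i,0)$ has only finitely many negative eigenvalues and ``multiplicity'' is read as algebraic multiplicity, with the generalized eigenspace identification you already indicate.
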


Let us take $\e>0$, $\rho_1 \in (\rho_*-\e, \rho_*)$ so that $\tau_1(H_\rho)<0$ for $\rho= \rho_1$, and $\rho_2$ close to $\bar{\lambda}_2^{-1}$ such that $\tau_1(H_\rho)>0$ for $\rho= \rho_2$. This choice is possible by Proposition \ref{keybif}. In order to apply Theorem \ref{Kras}, we first need to reformulate the problem. First, since $\rho \in [\rho_1, \rho_2]$ the constant $c_\rho$ defined in \eqref{constant} is uniformly bounded from above, say, by $C>0$. By taking $\e>0$ sufficiently small, we can assume that $\tau_1(H_\rho) >- \frac{1}{2C}$ for all $\rho \in [\rho_1, \rho_2]$.

Take $\mathcal{V}$ is a neighborhood of $0$ in $ C^{2,\alpha}_{G,m}(\Sp) $ such that $\mathcal{U} \supset [\rho_1, \rho_2] \times \mathcal{V}$, where $\mathcal{U}$ is the neighborhood given in the definition of $F$ in \eqref{defF}. Let us define:
$$S:[\rho_1, \rho_2] \times \mathcal{V} \to C^{1,\alpha}_{G,m}(\Sp), \ \ \ S(\rho, w)= w -F(\rho, w).$$
Observe that 
$$D_{w} S(\rho, 0) = c_\rho \, H_{\rho}(w) +  w. $$ Then, the first eigenvalue of $D_{w} S(\rho, 0)$ is strictly positive, so it is invertible with respect to $w$. By the Inverse Function Theorem, we can take a neighborhood $\mathcal{W}$ of $0$ in  $C^{1,\alpha}_{G,m}(\Sp)$ so that $S$ is invertible in $\mathcal{W}$ for any $\rho \in [\rho_1, \rho_2]$.
We now define:
$$ T:[\rho_1, \rho_2] \times \mathcal{W} \to C^{1,\alpha}_{G,m}(\Sp), \ \ \ T(\rho, w)= w - S_\rho^{-1}(w).$$

Since the image of $S_\rho^{-1}$ is in $C^{2,\alpha}_{G,m}(\Sp)$, by the Ascoli-Arzela Theorem the operator $T$ has the form of identity minus a compact operator. Clearly, $T(w)=0$ if and only if $F(w)=0$. Moreover,

$$D_w T(\rho, 0) = \mu w \Leftrightarrow (1- \mu) D_wS(\rho, 0)= w \Leftrightarrow H_\rho(w)= \frac{\mu}{(1-\mu) c_\rho} w.$$

Therefore $D_wT(\rho, 0)$ has the same number of negative eigenvalues (with the same multiplicity) as $H$ (recall that by the choice of $\rho_1$, $\rho_2$, the eigenvalues of $H$ are bigger than $-1$).

\bigskip

Observe that the operator $T$ and the values $\rho_1$, $\rho_2$ fit in the setting of Theorem \ref{Kras}. In order to conclude the proof of Theorem \ref{main}, we just need to show that for any $\rho \in (\rho_0, \bar{\lambda}_2^{-1})$ such that $\tau_1(H_\rho)=0$, its multiplicity is odd. This is a consequence of the assumption \ref{G} on the symmetry group $G$, as we will see in the next lemma.

\begin{lemma} \label{odd}
Take $\rho \in (\rho_0, \bar{\lambda}_2^{-1})$ such that $\tau_1(H_\rho)=0$. Then, $Ker (H_\rho)$ has odd multiplicity. 
\end{lemma}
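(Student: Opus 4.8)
The goal is to show that the kernel of $H_\rho$ is odd-dimensional whenever $\tau_1(H_\rho)=0$. The natural idea is to decompose $\operatorname{Ker}(H_\rho)$ according to the spherical harmonic modes that appear in the associated solutions $\psi_{\rho,w}$. By Lemma~\ref{tau} and the relation \eqref{QJ}, $w \in \operatorname{Ker}(H_\rho)$ corresponds (up to the factor $\rho$) to $\psi_w$ being a solution of the Steklov-type problem \eqref{sol} with $\eta=0$, i.e.\ an element of $E$ satisfying
\begin{equation*}
\begin{cases}
-\rho\Delta\psi - \psi + 3(u_\rho^+)^2\psi = 0 & \text{in }B,\\
\partial_\nu\psi + (N-1)\psi = 0 & \text{on }\partial B.
\end{cases}
\end{equation*}
Since the coefficients are radial, separating variables gives $\psi = \sum_k f_k(r)\vartheta_k(\theta)$, and each nonzero mode $f_k(r)\vartheta_k(\theta)$ is itself a solution. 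First I would fix a $G$-symmetric spherical harmonic $\vartheta_k$ with Laplace--Beltrami eigenvalue $\gamma_k$, and consider the ODE
\begin{equation*}
-\rho\Big(f'' + \frac{N-1}{r}f' - \frac{\gamma_k}{r^2}f\Big) - f + 3(u_\rho^+)^2 f = 0,\quad f\ \text{bounded at }0,
\end{equation*}
together with the boundary condition $f'(1) + (N-1)f(1) = 0$. The key point is that for each $k$ this one-dimensional problem has at most a one-dimensional space of admissible solutions (regularity at the origin fixes $f$ up to a scalar, by ODE uniqueness). Hence the multiplicity of mode $k$ in $\operatorname{Ker}(H_\rho)$ is either $0$ or exactly $\dim\{\vartheta\ :\ \text{eigenvalue }\gamma_k\}$, i.e.\ the multiplicity of $\gamma_k$ as a $G$-symmetric Laplace--Beltrami eigenvalue.

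Next I would argue that only one value of $k$ can contribute. Here is where assumption \ref{G} and the structure of $\tau_1$ enter. Because $\tau_1(H_\rho)=0$, the minimizing $\psi$ realizes $Q_\rho(\psi)=0$ on $E$; moreover, for the radial ($k=0$) mode the nondegeneracy from Proposition~\ref{linear1}(a) already forces $f_0 \equiv 0$, which is consistent with $\psi \in E$. I would then observe that for fixed $\rho$ the map $k \mapsto$ (smallest generalized Steklov eigenvalue in mode $k$) is strictly monotone in $\gamma_k$: increasing $\gamma_k$ strictly increases the quadratic form $Q_\rho$ restricted to mode-$k$ functions (the term $\rho\gamma_k\int r^{N-3}f^2$ is strictly increasing in $\gamma_k$), so the corresponding first Steklov eigenvalue $\eta_k(\rho)$ is strictly increasing in $k$. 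Consequently $\eta_k(\rho)=0$ can hold for at most one index $k$. Combining with the previous paragraph, $\operatorname{Ker}(H_\rho)$ is, for that single $k$, exactly the eigenspace of $\gamma_k$, whose dimension — by the hypothesis \ref{G}, which asserts that the relevant $G$-symmetric Neumann (equivalently Laplace--Beltrami) eigenvalue has odd multiplicity — is odd. I should double-check that the mode $k$ that can vanish is indeed the ``first nonradial'' one referenced in \ref{G}; this follows because $\tau_1$ is a \emph{first} eigenvalue, so the kernel sits at the bottom of the spectrum, and by the ordering of the $\eta_k(\rho)$ the only candidate with $\eta_k=0$ attached to the first eigenvalue is the lowest nonradial mode admissible in $E$.

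\textbf{Main obstacle.} The delicate step is the strict monotonicity / at-most-one-mode argument: one must be careful that the generalized Steklov eigenvalues $\eta_k(\rho)$ are well-defined and genuinely strictly ordered in $\gamma_k$, and that the mode achieving $\eta_k(\rho)=0$ at $\rho=\rho_*$ coincides with the spherical harmonic singled out in \ref{G} rather than some higher radial overtone $f$ of a lower $\gamma_k$. This requires using Proposition~\ref{linear1} (to kill $k=0$ and control overtones via Morse-index/Sturm arguments) together with the definition of $\rho_*$ and the positivity of $Q_D$ on $E_0$ for $\rho > \rho_0$, which prevents any Dirichlet-type overtone from entering the kernel. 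Once the kernel is pinned down to a single Laplace--Beltrami eigenspace, oddness is immediate from \ref{G}.
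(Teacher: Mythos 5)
Your proposal is correct and follows essentially the same route as the paper's proof: decompose $\psi_w$ into spherical-harmonic modes, observe that the quadratic form $Q_\rho$ (equivalently the generalized Steklov/Robin eigenvalue problem) is strictly monotone in $\gamma_k$, conclude that only the mode $\gamma_1$ can lie in the kernel, use ODE uniqueness at the origin to see that the radial factor is one-dimensional, and finally invoke \ref{G} for the oddness of the multiplicity of $\gamma_1$. One small caution: since the $\gamma_k$ are counted with multiplicity, the phrase ``$\eta_k(\rho)$ is strictly increasing in $k$'' should be ``strictly increasing in $\gamma_k$'' (it is constant across indices sharing the same $\gamma_k$), and the Sturm/Morse-index machinery you flag as a worry is not actually needed — the positivity of $Q_D$ on $E_0$ for $\rho>\rho_0$ already rules out Dirichlet-type overtones.
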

\begin{proof}
Let $w \in Ker(H_\rho)$, and $\psi_w \in E$ as given in Proposition \ref{psiv}. By Lemma \ref{E}, we can decompose $\psi_w$ in Fourier series as

$$ \psi_w = \sum_{k=1}^{+\infty} f_k(r) \vartheta_k(\theta).$$
Here $(r, \theta) \in (0,1] \times \Sp$ are spherical coordinates and $\vartheta_k$ denote $G$-symmetric spherical harmonics, normalized with respect to the $L^2$ norm. 

Then the quadratic form $\psi\mapsto Q_{\rho}(\psi)$ defined in $E$ can be given by
	\begin{equation*}
		Q_{\rho}(\psi)= \sum_{k=1}^{+\infty} Q_{\rho}^{k}(f_k),
	\end{equation*}	where the functional $Q_{\rho}^{k}$ is defined as
	\begin{align*}
		Q_\rho^k(f)&=\int_{0}^{1} \big (\rho f'(r)^{2}- f(r)^{2}+ 3 (u_{\rho}(r))^{2}f(r)^{2}) \big ) r^{N-1} \, dr   \\
		& +\rho \gamma_{k} \int_{0}^{1} f(r)^{2} r^{N-3}\, dr + \rho (N-1) f(1)^{2}.
	\end{align*}
Since $Q_\rho$ is semipositive definite, also $Q_\rho^k$ are semipositive definite. But $Q_\rho^k \geq Q_\rho^{1}$ for any $k$: this implies that $Q_\rho^k$ are positive definite for any $k \geq 2$. As a consequence, $\psi= f_1(r) \vartheta_1(\theta)$. Observe that $f_1$ is an eigenfunction of the problem in dimension 1, which have multiplicity 1. Moreover, $\gamma_1$ has odd multiplicity as an eigenvalue of the Laplace-Beltrami operator in $\Sp$, by assumption \ref{G} (see the Appendix in this regard). From this the proof of Lemma \ref{odd} follows.
\end{proof}

\section{Appendix}

This section is devoted to the study of the condition \ref{G} of the symmetry group $G$.  Let us consider the Helmholtz equation

\begin{equation*} -\Delta \phi = \phi \end{equation*}

Such equation admits solutions in the form $\phi(x)= f(r) \vartheta_k(\theta)$, where $\vartheta_k$ is a G-symmetric eigenfunction of the Laplace-Beltrami operator on $\Sp$. Observe that the corresponding eigenvalue $\gamma_k$ has the form$\gamma_k = i(i+N-2)$ for some $i=i(k)$ that depends on $G$. Moreover, $f$ solves the equation:

\begin{equation*} - f''(r) -(N-1) \frac{f'(r)}{r} + \gamma_k \frac{f(r)}{r^2}= f(r), \ \ f'(0)=0. \end{equation*}

The solutions of the above equation can be written in terms of the Bessel functions of the first kind $J_{\alpha}$ as:

\begin{equation} \label{bessel2} f(r)= r^{1-N/2} J_{\alpha}(r), \ \ \alpha= N/2+i(k)-1 \end{equation}

For radial functions, $k=i=0$ and $f(r)= r^{1-N/2} J_{N/2-1}(r)$. Then, the second eigenvalue of the Dirichlet problem in the unit ball $\bar{\lambda}_2$ is related to the second zero $r_2$ of $J_{N/2-1}(r)$, as
$$ \bar{\lambda}_2 = r_2^2.$$

This is an immediate consequence of the change of variables $\psi(x)= \phi(r_2x)$, $\psi:B \to \R$. \medskip

Moreover, the first nonradial eigenvalue $\sigma$ of the Laplacian under Neumann boundary conditions in $B_1$ is related to the first zero $s_1$ of $f'(r)$ where $f$ is given in \eqref{bessel2} with $i=i(1)$. The relation is, again,

\begin{equation*} \sigma = s_1^2. \end{equation*}
Then, to check assumption \ref{G} we need to show that
 \begin{equation} \label{a1} s_1 > r_2, \end{equation}
 and moreover that the space G-symmetric spherical harmonics related to $\gamma_1$ has odd dimension.
 
 \medskip 
 
At this point we need to estimate the roots of those expressions involving Bessel functions: we have used Mathematica for that. We will see that inequality \eqref{a1} imposes some lower bounds on $i(1)$.

\subsection{The case N=2}

If $N=2$, then one can estimate $r_2$  the second zero of $J_0(r)$ as:
$$ r_2 \sim 5.52008.$$
Now we are interested on the first zero of the derivative of $J_{i}(r)$: these are given in the following table:

\medskip

\begin{tabular}{| c | c |}
\hline    i $ $  &  First zero of $J_{i}'(r)$ \\
\hline	1 &    1.84118 \\
	2 &      3.05424 \\
3 &   4.20119 \\
4 &    5.31755 \\
5 &    6.41562 \\
\hline
\end{tabular}

\bigskip Then, it suffices to take a symmetry group $G$ such that $i(1) \geq 5$, or, in other words, a group $G$ excluding the first four spherical harmonics. Such spherical harmonics, in dimension 2, are just given by $\vartheta_j(\theta) = \cos(j \theta + \delta)$, $j=$ 1, 2, 3, 4. For instance, any dihedral symmetry group $\mathbb{D}_j$, $j \geq 5$, satisfies \ref{G} (and the multiplicity is equal to $1$).

\subsection{The case N=3}

If $N=3$, we have $J_{1/2}(r)= \sqrt{\frac{2}{\pi}} \,  \frac{\sin r}{\sqrt{r}}$ and its second positive root is clearly $$r_2= 2 \pi.$$ Now we present an estimate of the first zero of the derivative of $r^{-1/2} J_{1/2+i}(r)$ depending on $i$:

\bigskip

\begin{tabular}{| c | c |} 
 \hline  i $ $ &  First zero of 
	$\ \Big( r^{-1/2} J_{1/2+i}\Big)'(r)$  \\
	\hline 1 &    2.08158 \\
	2 &      3.34209 \\
	3 &   4.51410 \\
	4 &    5.64670 \\
	5 &    6.75646 \\
	\hline
\end{tabular}

\bigskip

Also here it suffices to take a symmetry group $G$ such that $i(1) \geq 5$: in other words, we require that the spherical harmonics associated to the first 4 eigenvalues are not $G$-symmetric. And we also need that $\gamma_1$ has odd multiplicity. One example of such group is the group of all isometries of the icosahedron, where $i(1)= 6$ and $\gamma_1=42$ is simple (see \cite{laporte}).

\subsection{The case N=4}

If $N=4$, the second zero of $J_{1}(r)$ is estimated as:
$$ r_2 \sim 7.01559.$$
We estimate the first zero of the derivative of $r^{-1} J_{1+i}(r)$ depending on $i$:

\bigskip

\begin{tabular}{| c | c |} 
	\hline  i $ $ &  First zero of 
	$\ \Big( r^{-1} J_{1 +i}\Big)'(r)$  \\
	\hline 1 &    2.29991 \\
	2 &      3.61126 \\
	3 &   4.81128 \\
	4 &    5.96235 \\
	5 &    7.08548 \\
	6 & 8.19039 \\
	\hline
\end{tabular}

\bigskip

Again, it suffices to take a symmetry group $G$ with $i(1) \geq 5$ such that $\gamma_1$ has odd multiplicity. An example of such group is the symmetry group of rotations of the hyper-icosahedron. The hyper-icosahedron is a regular polytope with 600 tetrahedral cells and 120 vertices, and its group of rotations forms a 7200 elements discrete subgroup of $SO(4)$ (see \cite{widom}). Because the efect of the inversion, all G-symmetric spherical harmonics of odd degree $i$ vanish. According to \cite{widom,  mamone}, for that group $G$ we have that $i(1)=12$ (the value $i$ corresponds to $n$ in the notation of \cite{widom}, and to $2l$ in the notation of \cite{mamone, widom}). 

The multiplicity is not written explicitly in \cite{widom}, but can be calculated from formulae (2.48) and (2.45) of that paper, taking into account the description of the rotations in $Y$ given in page 121. Then we can compute, for each even number $i$, the multiplicity of the space of G-symmetric spherical harmonics of order $i$, as:

\begin{align*} m(i)  & = \frac{1}{60} \left [20 \frac{\sin \Big (\frac{i+1}{2} \frac{2 \pi}{3} \Big)}{\sin \Big (\frac{1}{2} \frac{2 \pi}{3} \Big)}  + 12  \frac{\sin \Big (\frac{i+1}{2} \frac{2 \pi}{5} \Big)}{\sin \Big (\frac{1}{2} \frac{2 \pi}{5} \Big)}  + 12 \frac{\sin \Big (\frac{i+1}{2} \frac{4 \pi}{5} \Big)}{\sin \Big (\frac{1}{2} \frac{4 \pi}{5} \Big)}  \right. \\ &\left. + 15 \frac{\sin \Big (\frac{i+1}{2} \pi \Big)}{\sin \Big (\frac{1}{2} \pi \Big)} + i+1 \right ].\end{align*}

It turns out that $m(i)=0$ for $i=2$, $4$, $6$, $8$ and $10$ but $m(12)=1$, so that $G$ satisfies condition \ref{G}.

\bigskip

{\bf Data availability statement: } This manuscript has no associated data.

\end{document}